\documentclass[11pt]{article}

\usepackage[margin=1in]{geometry}
\usepackage[T1]{fontenc}
\usepackage{lmodern}
\usepackage{microtype}
\usepackage{amsmath,amssymb,amsthm,mathtools}
\usepackage{booktabs,array,longtable}
\usepackage{xcolor}
\usepackage{enumitem}
\usepackage{hyperref}
\usepackage[authoryear,round]{natbib}
\usepackage{tikz}
\usetikzlibrary{arrows.meta,positioning}
\usepackage{seqsplit}
\usepackage{fvextra}
\usepackage{xurl}

\hypersetup{
  colorlinks=true,
  linkcolor=blue!50!black,
  citecolor=blue!50!black,
  urlcolor=blue!55!black,
  pdftitle={A Positive-Measure Geometric Lower Bound for the Barzilai--Borwein Method}
}

\newtheorem{theorem}{Theorem}[section]
\newtheorem{proposition}[theorem]{Proposition}
\newtheorem{lemma}[theorem]{Lemma}

\theoremstyle{definition}

\newtheorem{certificate}[theorem]{Computer-assisted certificate}
\theoremstyle{remark}

\newcommand{\R}{\mathbb{R}}
\newcommand{\Spp}{\mathbb{S}_{++}}
\newcommand{\one}{\mathbf{1}}
\newcommand{\norm}[1]{\left\lVert #1\right\rVert}
\newcommand{\abs}[1]{\left|#1\right|}

\newcommand{\diag}{\operatorname{diag}}
\newcommand{\intt}{\operatorname{int}}

\newcommand{\rhoSp}{\rho}

\newcommand{\cmin}{\underline\rho}
\newcommand{\cmax}{\overline\rho}

\setlist{itemsep=0.35em,topsep=0.45em}
\title{\bfseries Barzilai–Borwein Fails Superlinear Convergence on an Open Set of Quadratics for Every Dimension $n\geq 4$}
\author{Dawei Li \qquad Xiaotian Jiang \qquad Mingyi Hong \\ University of Minnesota \\
   \texttt{\{li004678,jian0851,mhong\}@umn.edu}}
\date{July 14, 2026}

\begin{document}
\maketitle

\begin{abstract}
Barzilai--Borwein (BB) method has shown strong practical performance in continuous optimization, yet its convergence dynamics remains poorly understood. In particular, a central unresolved question is whether BB converges superlinearly for almost every strictly convex quadratic problem and initialization. We provide a negative answer to this question. 
Specifically, for every finite dimension $n\geq4$, we construct a nonempty open, hence positive-Lebesgue-measure, family of strictly convex quadratic problems and initial points for which the long Barzilai--Borwein method (BB1) converges but cannot converge root-superlinearly.  More precisely, with the explicit constants
\[
  \cmin=10^{-6},\qquad \cmax=0.61,
\]
every spectral component of the gradient is bounded above and below by the corresponding geometric sequence. Consequently, the gradient norm and the energy norm of the error satisfy two-sided geometric estimates with the same rates, while the objective gap satisfies the corresponding estimates with squared rates. In particular, all three quantities are bounded below by geometric sequences, ruling out superlinear convergence.
The construction is highly nontrivial, based on a computer-assisted proof of a nonresonant, attracting seven-cycle of the projectivized BB dynamics in dimension four.
\end{abstract}


\section{Introduction}
\label{sec:introduction}

Gradient descent is one of the oldest and most widely used methods in continuous optimization. Each iteration requires only a gradient and a few vector operations, making it attractive in large-scale problems. Its performance, however, can deteriorate on ill-conditioned settings: exact line search may produce zigzag iterates, whereas a good constant step requires spectral information that is rarely available in advance. 

The Barzilai--Borwein (BB) method \citep{BarzilaiBorwein1988} addresses this difficulty without changing the negative-gradient direction. Instead,  it extracts a scalar curvature estimate from the two most recent iterates. This change of step-size rule, rather than search direction, can substantially improve the practical behavior of a basic gradient iteration \citep{Fletcher2005,BirginMartinezRaydan2014}.

For a differentiable objective $f:\mathbb R^n\to\mathbb R$, write
$g_k=\nabla f(x_k)$, $s_{k-1}=x_k-x_{k-1}$, and $y_{k-1}=g_k-g_{k-1}$. The two classical BB step sizes are \citep{BarzilaiBorwein1988}
\begin{equation}
  \alpha_k^{\mathrm{BB1}}
  =\frac{s_{k-1}^{\mathsf T}s_{k-1}}
         {s_{k-1}^{\mathsf T}y_{k-1}},
  \qquad
  \alpha_k^{\mathrm{BB2}}
  =\frac{s_{k-1}^{\mathsf T}y_{k-1}}
         {y_{k-1}^{\mathsf T}y_{k-1}},
  \qquad
  x_{k+1}=x_k-\alpha_k g_k.
  \label{eq:intro-bb-steps}
\end{equation}
The first choice makes the scalar matrix $(\alpha_k^{\mathrm{BB1}})^{-1}I$ the least-squares solution of the secant equation; the second makes $\alpha_k^{\mathrm{BB2}}I$ a least-squares approximation of the inverse Hessian. They are therefore often called the long and short BB steps, respectively. 
The formulation of the two BB steps are different, but they possess similar properties in training.
BB runs at essentially the cost of gradient descent while carrying curvature information without forming a matrix and, in its basic form, without performing a line search. On a quadratic, the reciprocal BB step is a Rayleigh quotient of the Hessian; this is also the origin of the name \emph{spectral gradient method}. 

Despite this simple construction, BB generates nonlinear, history-dependent dynamics even on quadratic objectives. A longstanding open question is whether BB converges superlinearly for almost every strictly convex quadratic problem and initialization, or whether failure of
root-superlinear convergence can persist on a set of positive measure rather
than only on exceptional instances. More precisely, the question is whether
\[
  \lim_{k\to\infty}\norm{g_k}^{1/k}=0
\]
for almost every strictly convex quadratic problem and initialization. We aim to resolve this question in this paper.

\paragraph{Significance and applications.}
The BB step has become an algorithmic primitive rather than a single method.
Raydan combined it with a non-monotone globalization strategy for large-scale
unconstrained optimization \citep{Raydan1997}; related non-monotone
frameworks were developed in \citet{GrippoSciandrone2002} and
\citet{ZhangHager2004}, and the spectral projected gradient method brought
the same step selection to optimization over convex sets
\citep{BirginMartinezRaydan2000,BirginMartinezRaydan2001,DaiFletcher2005PBB,AndreaniBirginMartinezYuan2005}.
BB-type spectral steps now appear in support-vector-machine training
\citep{SerafiniZanghiratiZanni2005}, sparse reconstruction and compressed
sensing
\citep{FigueiredoNowakWright2007,WrightNowakFigueiredo2009,VanDenBergFriedlander2009,WenYinGoldfarbZhang2010},
image restoration \citep{WangMa2007,BonettiniZanellaZanni2009}, nonnegative
matrix factorization \citep{HuangLiuZhou2015}, optimization with
orthogonality constraints \citep{WenYin2013}, Riemannian optimization
\citep{IannazzoPorcelli2018}, stochastic and variance-reduced methods
\citep{TanMaDaiQian2016}, distributed optimization with locally computed
step sizes \citep{GaoLiuDaiHuangYang2022}, and PDE-constrained optimization
in Hilbert space \citep{AzmiKunisch2020,AzmiKunisch2022}; more recent
safeguarded and adaptive constructions seek global guarantees on broader
function classes \citep{BurdakovDaiHuang2019,ZhouMaYang2026}.
These descendants alter the pure iteration in essential ways, such as projection, regularization, line searches, stochastic averaging, safeguarding, but all of them inherit its step-size mechanism. Understanding the unmodified deterministic iteration is therefore valuable both in its own right and as a foundation for interpreting its many descendants.

\paragraph{Earlier convergence and rate theory.}
The known convergence theory of BB mostly focuses on low-dimensional problems. \cite{BarzilaiBorwein1988} proved $R$-superlinear convergence for two-dimensional strictly convex quadratics. \cite{Raydan1993} established global convergence for arbitrary finite-dimensional strictly convex quadratics, and \cite{DaiLiao2002} proved an $R$-linear rate in
arbitrary dimension, together with a local consequence for sufficiently smooth
nonquadratic objectives near a nondegenerate minimizer. 
In dimension two, \citet{Dai2013} related the rate to the initial spectral balance and showed that superlinear behavior holds for almost every initialization, while linear behavior occurs on an exceptional set. \cite{DaiFletcher2005} developed an asymptotic framework for gradient methods with retarded spectral information and, by computing the asymptotics of BB and its relatives, observed a transition from superlinear to linear behavior at some dimension $n\geq 4$ depending on the method. Their statements are asymptotic computations and numerical observations, and no rigorous lower-rate mechanism in dimension $n\geq 4$ was identified.

For general dimension, rate bounds have been substantially less informative about the observed fast behavior. A recent refinement by \citet{LiSun2021} proves, for BB1 on an arbitrary finite-dimensional strongly convex quadratic with condition number $\kappa=\lambda_n/\lambda_1$, an $R$-linear upper factor $1-1/\kappa$. This is comparable in condition-number scaling to the exact-line search steepest-descent factor. The same work gives a lower-rate example supported only on the two extreme eigenspaces, with exact factor $(\kappa-1)/(\kappa+1)$, ruling out superlinear convergence. However, for $n\geq 2$ its initial point lies in a Lebesgue-null set, so an arbitrarily small perturbation could possibly accelerate the convergence; the same paper explicitly raises the convergence rate under generic initialization as an open question.

Parallel lines of work have sought either to globalize BB or to design spectral rules with more transparent asymptotics. These include gradient methods with retards \citep{FriedlanderMartinezMolinaRaydan1998}, alternate and cyclic BB schemes \citep{Dai2003,DaiHagerSchittkowskiZhang2006}, convex combinations of the long and short BB steps \citep{DaiHuangLiu2019}, and steps engineered for two-dimensional termination or favorable spectral limits \citep{Yuan2006,HuangDaiLiu2021,HuangDaiLiuZhang2022}. This literature gives considerable evidence that low-dimensional spectral mechanisms govern the performance of many gradient methods.
What has been missing for the original BB1 method is an answer to the following question: 

\begin{center}
\fbox{%
  \parbox{0.9\linewidth}{%
    \centering
    In dimension $n\geq4$, is every linear-rate BB1 example an unstable exception, or does genuinely full-dimensional linear-rate behavior persist on a robust family?
  }%
}
\end{center}

\paragraph{Contribution of the present lower bound.}
This paper answers this question for BB1. For every finite dimension $n\geq4$ and every linear term $b\in\mathbb R^n$, we construct a nonempty open set $\mathcal A_n\subset\mathbb S_{++}^n$ of simple-spectrum matrices such that, for every $A\in\mathcal A_n$, there is a nonempty open set $\mathcal X_n(A)$ of initial points for which the admissibly initialized BB1 iteration is well defined and converges to $x_*=A^{-1}b$.  More precisely, if $\Pi_i(A)$ denotes the orthogonal spectral projector associated with the $i$th eigenvalue, then

\begin{equation}
  10^{-6k}\,\lVert\Pi_i(A)g_0\rVert
  \;\leq\;
  \lVert\Pi_i(A)g_k\rVert
  \;\leq\;
  0.61^k\,\lVert\Pi_i(A)g_0\rVert,
  \qquad i=1,\ldots,n,\quad k\geq0.
  \label{eq:intro-main-bound}
\end{equation}
Here ``admissibly initialized'' means that the first step is the inverse gradient Rayleigh quotient,
\[
  \alpha_0=\frac{g_0^{\mathsf T}g_0}{g_0^{\mathsf T}Ag_0},
  \qquad x_1=x_0-\alpha_0g_0,
\]
so the orbit is an actual BB1 trajectory rather than an arbitrarily prescribed
pair of projective states.  The joint set $\Omega_n
  =\{(A,x_0):A\in\mathcal A_n,\ x_0\in\mathcal X_n(A)\}$
is open and nonempty in $\mathbb S_{++}^n\times\mathbb R^n$.  It therefore has positive Lebesgue measure, as does every initial-point fiber $\mathcal X_n(A)$.  Summing \eqref{eq:intro-main-bound} over the orthogonal spectral components gives the same two-sided geometric estimate for the gradient norm and the energy norm of the error; the objective gap satisfies the corresponding squared estimate.  In particular,
\[
  \liminf_{k\to\infty}\lVert g_k\rVert^{1/k}\geq10^{-6}>0,
\]
so none of these trajectories is root-superlinearly or $Q$-superlinearly convergent. An almost-everywhere superlinearity statement on the joint problem-data space is therefore false in every dimension $n\geq 4$; in particular, the question of \citet{LiSun2021} about generic initialization is answered in the negative on an open set of matrices.

The result strengthens the earlier lower-rate picture in three ways.

\begin{enumerate}[label=\emph{(\roman*)},leftmargin=2.8em]
  \item \emph{Robustness.}  The slow behavior is not confined to a specially balanced endpoint subspace or to a zero-measure initialization. It persists under simultaneous perturbations of the spectrum and the initial point.

  \item \emph{A dynamical mechanism.}  The proof identifies a nonresonant, attracting period-seven orbit of the projectivized four-dimensional BB1 dynamics. A certified fifteen-step connection reaches its basin from the admissible-initialization manifold, and a spectral translation makes every radial multiplier contractive. Attraction of the projective cycle, rather than invariance of a coordinate subspace, creates the open basin and the lower geometric rate.

  \item \emph{Full-dimensional persistence.}  Four spectral modes remain active along the asymptotic cycle. A uniform transverse Floquet estimate then embeds the construction into each finite dimension $n>4$ while keeping every additional component nonzero and geometrically controlled.  The normalized spectral measure consequently approaches a seven-periodic measure rather than collapsing to the two endpoint modes familiar from exact steepest descent \citep{Akaike1959,Forsythe1968,PronzatoWynnZhigljavsky2006}.
\end{enumerate}

The periodic orbit is certified by interval arithmetic and a contraction argument; it is not inferred from a long floating-point simulation. This computer-assisted component establishes exact existence, nonresonance, and attraction inside explicit rational boxes; openness, positive measure, and persistence in higher dimension then follow from analytic stability arguments.

The theorem is an existence result, not a universal rate formula.  It
concerns BB1, not every BB variant.  The set $\mathcal A_n$ is a small open
neighborhood of one explicitly constructed spectrum---three separated
eigenvalues together with, for $n>4$, the remaining eigenvalues clustered in
a short interval near the top---so the phrase ``every finite dimension''
asserts that such a neighborhood exists for each $n$, not that its size is
uniform in $n$.  The constants $10^{-6}$ and $0.61$ are conservative
separation margins rather than claimed optimal asymptotic factors.  The
contribution is structural: it proves that the dimension-four transition from low-dimensional superlinear behavior to linear-rate behavior can be generated by a stable recurrent attractor and can occupy positive measure.  In this sense, the lower bound complements the classical and recent $R$-linear upper bounds.  The upper theory controls how slow BB1 can be; the present result shows that a positive linear root factor is actually realized robustly, and explains the orbit geometry that sustains it.

\paragraph{AI-Assisted Development.}
The sketch of the proof of the main result of this paper was obtained by GPT 5.6 Sol. GPT was given an equivalent problem formulation (component-wise quotient formulation) and asked to construct the lower bound example. The prompt is provided in Appendix \ref{app:audit-prompt}. The construction by GPT is highly non-trivial. Human verification and polishing were done afterwards. 

\section{The BB1 method and the main theorem}

Let $\Spp^n$ denote the open cone of real symmetric positive-definite
$n\times n$ matrices.  It is an open subset of the Euclidean space
$\mathbb S^n$, which has dimension $n(n+1)/2$.  Fix $A\in\Spp^n$ and
$b\in\R^n$, and consider
\begin{equation}\label{eq:quadratic}
  f_{A,b}(x)=\frac12x^TAx-b^Tx,
  \qquad x_*:=A^{-1}b,
  \qquad g(x):=\nabla f_{A,b}(x)=Ax-b.
\end{equation}
The unique minimizer is $x_*$ because $A\succ0$.

We study the long Barzilai--Borwein method, usually called BB1
\cite{BarzilaiBorwein1988}; classical convergence analyses on strictly convex
quadratics include \cite{Raydan1993,DaiLiao2002}.  The initialization is part of the theorem and is
therefore stated explicitly.  Given $x_0\neq x_*$, put $g_0=Ax_0-b$ and
\begin{equation}\label{eq:alpha0}
 \alpha_0:=\frac{g_0^Tg_0}{g_0^TAg_0},
 \qquad x_1:=x_0-\alpha_0g_0.
\end{equation}
For $k\geq1$, provided the quantities are defined, set
\begin{equation}\label{eq:bb1}
 \begin{aligned}
   s_{k-1}&:=x_k-x_{k-1},
   &y_{k-1}&:=g_k-g_{k-1},\\
   \alpha_k&:=\frac{s_{k-1}^Ts_{k-1}}{s_{k-1}^Ty_{k-1}},
   &x_{k+1}&:=x_k-\alpha_kg_k,
 \end{aligned}
 \qquad g_k:=Ax_k-b.
\end{equation}
The choice \eqref{eq:alpha0} is the inverse Rayleigh quotient of the initial gradient. It makes the first two reciprocal step sizes equal. 

\paragraph{Main Result.}
For a simple-spectrum matrix $A$, let
\[
  0<\lambda_1(A)<\cdots<\lambda_n(A)
\]
be its ordered eigenvalues and let $\Pi_i(A)$ be the orthogonal projector onto
the one-dimensional eigenspace corresponding to $\lambda_i(A)$.  The
orientation of an eigenvector is irrelevant because all conclusions below use $\norm{\Pi_i(A)g_k}$.

\begin{theorem}[Positive-measure lower rate for BB1]\label{thm:main}
Fix an integer $n\geq4$ and a vector $b\in\R^n$.  There exist a nonempty open
set $\mathcal A_n\subset\Spp^n$ consisting of simple-spectrum matrices and,
for every $A\in\mathcal A_n$, a nonempty open set
$\mathcal X_n(A)\subset\R^n\setminus\{x_*\}$ with the following properties.

For every $x_0\in\mathcal X_n(A)$, the BB1 iteration
\eqref{eq:alpha0}--\eqref{eq:bb1} is well-defined for every $k\geq0$, converges
to $x_*$, and satisfies, for every $i\in\{1,\ldots,n\}$ and every $k\geq0$,
\begin{equation}\label{eq:component-main}
 \cmin^k\norm{\Pi_i(A)g_0}
 \leq \norm{\Pi_i(A)g_k}
 \leq \cmax^k\norm{\Pi_i(A)g_0},
 \qquad \cmin=10^{-6},\quad\cmax=0.61.
\end{equation}
In particular, none of the initial spectral components is zero.

The union
\begin{equation}\label{eq:omega-def}
 \Omega_n:=\{(A,x):A\in\mathcal A_n,\ x\in\mathcal X_n(A)\}
 \subset \Spp^n\times\R^n
\end{equation}
is nonempty and open.  Consequently $\Omega_n$ has positive Lebesgue measure
in $\mathbb S^n\times\R^n$, and each fiber $\mathcal X_n(A)$ has positive
Lebesgue measure in $\R^n$.

Moreover, writing $e_k=x_k-x_*$ and
$\norm{e}_A=(e^TAe)^{1/2}$, one has
\begin{align}
 \cmin^k\norm{g_0}
 &\leq \norm{g_k}\leq \cmax^k\norm{g_0},
 \label{eq:gradient-main}\\
 \cmin^k\norm{e_0}_A
 &\leq \norm{e_k}_A\leq \cmax^k\norm{e_0}_A,
 \label{eq:energy-main}\\
 \cmin^{2k}\bigl(f(x_0)-f(x_*)\bigr)
 &\leq f(x_k)-f(x_*)
 \leq \cmax^{2k}\bigl(f(x_0)-f(x_*)\bigr).
 \label{eq:objective-main}
\end{align}
Thus
\begin{equation}\label{eq:root-lower}
 \liminf_{k\to\infty}\norm{g_k}^{1/k}\geq10^{-6},
 \qquad
 \liminf_{k\to\infty}\norm{e_k}_A^{1/k}\geq10^{-6},
\end{equation}
and these BB1 orbits are not root-superlinearly or $Q$-superlinearly
convergent.
\end{theorem}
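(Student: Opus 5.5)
The plan is to reduce BB1 on a quadratic to a finite-dimensional dynamical system on spectral weights, and then to certify an attracting periodic orbit of that system.

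\textbf{Spectral reduction.} Since BB1 is invariant under orthogonal changes of variables and the translation $x\mapsto x-x_*$, we may take $A=\diag(\lambda_1,\ldots,\lambda_n)$ and work with the gradient components $g_k^{(i)}$. On a quadratic, $g_{k+1}=(I-\alpha_kA)g_k$. Moreover $s_{k-1}=-\alpha_{k-1}g_{k-1}$ and $y_{k-1}=As_{k-1}$, so
\[
\alpha_k^{-1}=\frac{g_{k-1}^TAg_{k-1}}{g_{k-1}^Tg_{k-1}} .
\]
This is the retarded Rayleigh quotient. The admissible initialization makes $\alpha_1=\alpha_0$. Writing $w_k^{(i)}=(g_k^{(i)})^2/\norm{g_k}^2$, the pair $(w_{k-1},w_k)$ lies in a product of simplices and evolves by a rational map $F_\lambda$. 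The radial multipliers are $\abs{1-\lambda_i/\mu_k}$, where $\mu_k=\sum_i\lambda_iw_{k-1}^{(i)}$. The estimate \eqref{eq:component-main} is therefore equivalent to
\[
\cmin\leq\abs{1-\lambda_i/\mu_k}\leq\cmax \qquad\text{for all } i,k .
\]

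\textbf{Dimension four.} For $n=4$ and an explicit rational spectrum, I would certify by interval arithmetic (a Krawczyk or Newton–Kantorovich contraction in a rational box) a fixed point of $F_\lambda^7$ in the open projective domain. I would then enclose the derivative of $F_\lambda^7$ and show that its spectral radius is strictly below one; this gives the attracting and nonresonant properties. Attraction yields a forward-invariant neighborhood $U$ on which the multipliers stay in $[\cmin,\cmax]$, with margin.

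Exact initializations do not start in $U$. They start on the admissible manifold $\{(w_0,w_0)\}$, since $\alpha_1=\alpha_0$ means $\mu_1=\mu_0$. I would therefore certify a finite (fifteen-step) transient: starting from a box of initial weights, interval propagation shows that the orbit lands in $U$, and along the way every multiplier lies in $[\cmin,\cmax]$ with all denominators bounded away from zero. Since $\cmax<1$, convergence is immediate. All certified inequalities are strict, so continuity in $(\lambda,w_0)$ gives openness in the spectrum and in the initial point.

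Openness in $\mathbb S^n\times\R^n$ then follows from three facts: simple eigenvalues and their spectral projectors depend analytically on $A$, the map $x_0\mapsto w_0$ is a submersion where all components are nonzero, and the constraints $\norm{\Pi_i(A)g_0}\neq0$ are open. The fiber $\mathcal X_n(A)$ is nonempty because $g_0$ ranges over all of $\R^n$ as $x_0$ does. The norm bounds \eqref{eq:gradient-main}–\eqref{eq:objective-main} follow by summing the component bounds, using
\[
\norm{e_k}_A^2=\sum_i\lambda_i^{-1}(g_k^{(i)})^2
\qquad\text{and}\qquad
f(x_k)-f(x_*)=\tfrac12\norm{e_k}_A^2 .
\]

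\textbf{Dimension $n>4$ (main obstacle).} Add eigenvalues clustered near one of the four base eigenvalues (near the top), with small weights. The added components perturb the four-dimensional cycle only at order of their total weight. Their own multipliers $\abs{1-\lambda_j/\mu_k}$ are close to that of the parent mode, so along the cycle their Floquet multipliers relative to the parent are close to $1$. I need a uniform transverse estimate showing that these weights neither blow up nor feed back enough to destroy the attraction. I would first try to treat the cluster as a single effective mode, with the relative deviations between cluster members contributing an error that is controlled cumulatively along the orbit.

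Because the per-step multipliers of the clustered modes are within $O(\text{cluster width})$ of the parent's, which lies strictly inside $[\cmin,\cmax]$ by the four-dimensional margin, a sufficiently narrow cluster keeps all multipliers inside the bounds. Certifying this uniformly over all $k$, rather than only over finitely many steps, is the delicate step. I expect to need a Lyapunov-type bound combining the contraction of the four-dimensional cycle with a weighted norm on the transverse directions.
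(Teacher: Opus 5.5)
\textbf{Verdict: genuine gap at $n>4$.} Your reduction, the four-dimensional certificate, and the lift to $\Spp^n\times\R^n$ follow the paper's route. The missing piece is the case $n>4$: you leave the uniform transverse estimate as something you ``expect to need'', and clustering at a parent mode puts you exactly in the neutral case. The face where the extra weights vanish is invariant, since $q_\ell'=q_\ell(a-\nu_\ell)^2/Z$. At the boundary seven-cycle the seven-step derivative is block upper triangular, with base block $A_7$ and transverse diagonal entries $\tau(\nu_\ell)=\prod_{t=0}^{6}(a_t-\nu_\ell)^2/Z_t$. For an active eigenvalue, periodicity of $p_{t,j}$ gives exactly $\tau(\lambda_j)=1$. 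So a cluster at a parent mode has transverse Floquet multipliers close to $1$; this is the neutrality you noticed. Whether the extra weights decay or grow is then decided by the sign of $\log\tau$, which you never examine. Since $d\log\tau/d\nu=\sum_t 2/(\nu-a_t)>0$ at $\nu=\lambda_n$, a cluster just above the top eigenvalue is transversally repelling, and one just below is only barely attracting. Your remark that cluster multipliers are close to the parent's gives the factor bounds only once you know the orbit stays in the tube for all $k$, which is the unproved point. Nor can you get persistence from the collapsed spectrum by the implicit-function theorem: there the cycle is not isolated, because every internal distribution of the cluster weight is periodic.

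The missing idea is to place the extra eigenvalues where $\tau(\nu)<1$ with a definite margin, away from every active eigenvalue and near a cycle value $a_t$. The paper uses $\nu\in(7.49,7.51)$ in the shifted spectrum and certifies $\tau(\nu)<0.049$, together with separation of every $a_t$ from $\nu$ (needed for the lower factor of the extra components). Then $\rhoSp(M_n)<1$, and $\sum_k(M_n^T)^kM_n^k$ is a Lyapunov matrix on the whole state space. The implicit-function theorem continues the boundary cycle under perturbation of all $n$ eigenvalues. Finally, a small positive tilt of the extra weights into $\intt\Delta_{n-1}$, with finite-time continuity of the fifteen-step connection, gives the open set $L_n\times P_n$.

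Two smaller points in your $n=4$ step:
\begin{enumerate}
\item Checking by interval propagation that the transient lands in $U$ needs an explicitly certified, quantitative basin. The paper avoids this by making $\bar\lambda_3$ an unknown, so the landing $F^{15}_{\bar\lambda}(D_{\bar\lambda}(r_*))=z_*$ is exact and only continuity is needed.
\item $\cmax=0.61$ does not come out of a certificate at an arbitrary spectrum. For the unshifted $\bar\lambda$ one has $\abs{a_k-4}/a_k>1$, both at $k=0$ and at the cycle phase $a_*\approx1.33$. It follows automatically from the translation $\lambda\mapsto\lambda+4\one$, which leaves the projective dynamics unchanged and gives $\abs{a_k-\lambda_i}/a_k\le(\lambda_n-\lambda_1)/\lambda_1<3.02/4.99<0.61$ near $\lambda^*$.
\end{enumerate}
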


The proof occupies Sections~\ref{sec:BB-recurrence}--\ref{sec:lift}.
The constants are conservative and are not claimed to be optimal. The result is an existence theorem for an open family; it does not assert the same lower bound for every positive spectrum.

\paragraph{Why the BB dynamics are difficult.}
Even for the quadratic objective \eqref{eq:quadratic}, BB1 generates a
nonlinear delayed dynamical system.
Let $0<\lambda_1\leq\cdots\leq\lambda_n$ be the eigenvalues of $A$, let
$v_1,\ldots,v_n$ be an orthonormal eigenbasis, and write
$g_k=\sum_i d_k^i v_i$.  Then for BB1 on this quadratic,

\begin{equation}
  \alpha_k
  =\frac{\sum_{j=1}^n(d_{k-1}^j)^2}
         {\sum_{j=1}^n\lambda_j(d_{k-1}^j)^2},
  \qquad
  d_{k+1}^i=(1-\lambda_i\alpha_k)d_k^i,
  \label{eq:intro-spectral-recurrence}
\end{equation}
equivalently,
\begin{equation}
  d_{k+1}^i
  =d_k^i\,
  \frac{\sum_{j=1}^n(\lambda_j-\lambda_i)(d_{k-1}^j)^2}
       {\sum_{j=1}^n\lambda_j(d_{k-1}^j)^2}.
  \label{eq:intro-delayed-recurrence}
\end{equation}

Equation \eqref{eq:intro-delayed-recurrence} displays the first source of
difficulty.  Diagonalization completely decouples fixed-step gradient
descent, but it does not decouple BB1. The update of the $i$th component $d^i$ from time $k$ to time $k+1$ depends on every component at time $k-1$. Thus, although the objective is diagonal in the eigenbasis, its spectral modes remain coupled through a delayed Rayleigh quotient.
This coupled and delayed feedback produces nonlinear behavior. Depending on
the Rayleigh quotient selected by the preceding gradient, the multiplier of a spectral component can be negative, close to zero, or greater than one in
absolute value. Consequently, components can change sign, high-eigenvalue components can grow, and different eigendirections may take turns dominating the gradient, so neither the objective value nor the gradient norm is monotone.
(On nonquadratic objectives, the situation is more delicate -- $s_{k-1}^{\mathsf T}y_{k-1}$ may fail to remain positive, and the unsafeguarded method can take excessively long steps or fail to converge even under strong convexity \citep{Fletcher2005,BurdakovDaiHuang2019} -- but they are not the subject of this paper.)

The second difficulty is determining the actual asymptotic rate. As a
component multiplier can be close to zero, a spectral component may become
arbitrarily small in a single iteration. 
An upper estimate $\lVert g_k\rVert\leq Cq^k$ therefore cannot distinguish genuinely linear convergence from faster convergence, such as superlinear.
Such an upper bound also does not determine which spectral modes survive, whether the normalized state approaches a fixed point or a cycle, or what lower asymptotic rate is actually attained. Ruling out superlinear convergence needs a lower bound, or an equally precise description of the asymptotic orbit. 

There is a third difficulty in answering an almost-everywhere question.  A
single specially constructed lower bound example could be an isolated exception and hence belong to a set of Lebesgue measure zero \citep{Dai2013,LiSun2021}. In practice, such an example may depart from the theoretical trajectory and lead to faster convergence rate due to numerical instability \cite{he2025new}. 
A robust lower bound example should additionally prove that the convergence rate persists under perturbations of both the spectrum and the initialization.
This is the aspect of BB dynamics for which the classical theory has been least complete.

\section{Outline of the proof}
\label{sec:outline}

The lower bound is not obtained by estimating a generic BB1 trajectory.
The proof constructs one rigorously certified orbit of a low-dimensional
dynamical system, verifies that this orbit stays uniformly away from every
resonance, and then lets stability and continuity enlarge the single orbit
into an open family.  Figure~\ref{fig:proof-map} displays the chain of
implications; this section walks through it once.

\paragraph{Reduction to a factor bound (\S\ref{sec:BB-recurrence}).}
On a quadratic, BB1 acts on each spectral component of the gradient through
the exact multiplier identity
$d_{k+1}^i=\bigl((a_k-\lambda_i)/a_k\bigr)\,d_k^i$, where
$a_k=\alpha_k^{-1}$ is the delayed Rayleigh quotient, so that
$\abs{d_k^i}=\abs{d_0^i}\prod_{t<k}\abs{a_t-\lambda_i}/a_t$.  All
conclusions of Theorem~\ref{thm:main} therefore follow from a single uniform
one-step inequality,
\begin{equation}\label{eq:outline-factor}
 10^{-6}\;\leq\;\frac{\abs{a_k-\lambda_i}}{a_k}\;\leq\;0.61
 \qquad\text{for every $i$ and every $k\geq0$:}
\end{equation}
the lower bound is a nonresonance condition ($a_k$ never comes too close to
an eigenvalue), the upper bound forces convergence.  Since the gradient
itself tends to zero, its scale is removed: the pair $(a_k,p_k)$ of the
reciprocal step size and the normalized squared spectral weights evolves
under a rational map $F_\lambda$ on $\R\times\Delta_{n-1}$, with the
admissible first step encoded by an initialization map $D_\lambda$.  If this
projective state approaches a periodic orbit that is separated from the
spectrum, all late multipliers in \eqref{eq:outline-factor} lie in a fixed
compact subinterval of $(0,1)$, and the finite initial segment is handled by
continuity.

\begin{figure}[t]
\centering
\includegraphics[width=\textwidth]{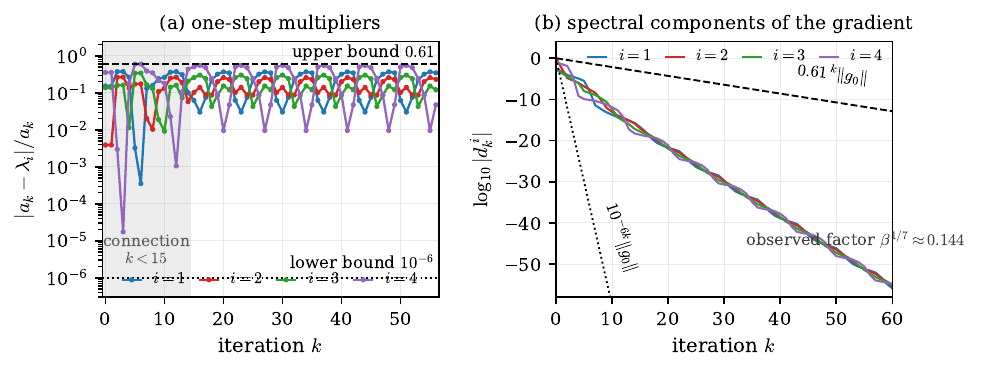}
\caption{Numerical realization of the factor bounds along the certified
orbit at the central shifted spectrum $\lambda^*$, computed in $200$-digit
arithmetic.  \emph{(a)} The one-step multipliers
$\abs{a_k-\lambda_i}/a_k$: after the fifteen-step connection (shaded) they
are seven-periodic and remain inside the certified band
$[10^{-6},\,0.61]$; the transient minimum, approximately
$1.7\times10^{-5}$, occurs at $k=3$.  \emph{(b)} The four spectral
components of the gradient decay in parallel between the two geometric
envelopes; the observed asymptotic per-step factor is
$\beta^{1/7}\approx0.144$, where $\beta\approx1.29\times10^{-6}$ is the
contraction of the raw coordinates over one seven-cycle.  The figure is an
illustration; the rigorous statements are those of
Certificate~\ref{cert:main}.}
\label{fig:factor-bounds}
\end{figure}

\paragraph{The certified periodic object (\S\ref{sec:certificate}).}
The construction is carried out in dimension four.  The unknowns are a free
eigenvalue $\bar\lambda_3$, a projective state $z$, and an admissible initial
weight $r$; they satisfy eight rational equations, namely
$F_{\bar\lambda}^{7}(z)=z$ (a seven-cycle) together with
$F_{\bar\lambda}^{15}(D_{\bar\lambda}(r))=z$ (an exact fifteen-step landing).
The landing block is forced by a dimension count: the projective state space
is four-dimensional while the admissibly initialized states form a
three-dimensional hypersurface, so an attracting cycle need not meet the
forward orbit of any genuine BB1 initialization.  A Newton-like map
$H(u)=u-RG(u)$ with an exact rational preconditioner $R$ is then evaluated in
directed interval arithmetic on a box $X$ of radius $10^{-55}$; the certified
inclusions $H(X)\subset\intt X$ and $\sup_X\norm{DH}_\infty<1$ make Banach's
theorem produce an exact root $u_*$
(Lemma~\ref{lem:exact-root}).  The same run certifies that every
normalization denominator (namely $Z_t$) is positive, that the cycle and the connection stay
separated from all eigenvalues, and that an explicit Lyapunov pair
$(P,Q_7)$ is positive definite.  Two logically distinct contractions appear
here: $H$ only certifies the root; attraction is proved separately.

\paragraph{Attraction, rates, and openness in dimension four
(\S\ref{sec:floquet}--\S\ref{sec:four-open}).}
The certified inequality $P-A_7^TPA_7\succ0$ gives $\norm{A_7}_P<1$, so the
seven-cycle is Schur stable and possesses an open basin
(Lemma~\ref{lem:nonlinear-attraction}).  The rate constants come from two
independent mechanisms.  A translation $\lambda\mapsto\lambda+4\one$ commutes
with the projective dynamics (Lemma~\ref{lem:translation}) and moves the
spectrum into $(4.99,\,8.01)$ with width below $3.02$; since every $a_k$ is a
convex combination of eigenvalues, this alone yields the upper factor
$3.02/4.99<0.61$.  The certified separation yields the lower factor
$10^{-5}/8.01>10^{-6}$.  Because $1$ is not an eigenvalue of $A_7$, the
implicit-function theorem continues the cycle to all nearby spectra, and
finite-time continuity of the fifteen-step map carries all nearby admissible
initializations into the trapping tube: one trajectory becomes an open
product $L_4\times P_4$ (Proposition~\ref{prop:n4-open}).

\paragraph{Every dimension, and the lift to BB1 problems
(\S\ref{sec:higher-n}--\S\ref{sec:lift}).}
For $n>4$, the extra eigenvalues are placed in $(7.49,7.51)$ with zero
initial weight, so the four-dimensional cycle survives on an invariant
simplex face.  The seven-step derivative is block upper triangular there, and
the certificate bounds every transverse multiplier by
$\tau(\nu)<0.049$, so the boundary cycle attracts in the new directions as
well; tilting the extra weights to small positive values and continuing in
all $n$ eigenvalues gives open sets $L_n\times P_n$ for each fixed finite
$n$ (Proposition~\ref{prop:n-open}), with the rate constants---though not
the neighborhood sizes---uniform in $n$.  Finally, three changes of
variables lift the projective family to problem data: the orthant polar map
$d_0^i=\sigma_ir\sqrt{p_{0,i}}$, the continuity of the spectral-projector
weights $p(A,g)$ on simple-spectrum matrices, and the diffeomorphism
$(A,x)\mapsto(A,Ax-b)$.  The resulting set $\Omega_n$ of problems and
initial points is open and nonempty, hence of positive Lebesgue measure, and
Theorem~\ref{thm:main} follows.

\begin{figure}[t]
\centering
\begin{tikzpicture}[>={Stealth[length=2.2mm]}, font=\footnotesize,
  node distance=6.5mm,
  stage/.style={draw=black!55, rounded corners=2.5pt, align=center,
               inner xsep=8pt, inner ysep=5pt,
               text width=0.70\textwidth, fill=black!3},
  cert/.style={stage, fill=orange!12, draw=orange!55!black!60},
  tool/.style={font=\scriptsize\itshape, text=black!55, align=left,
               anchor=west},
  arr/.style={->, semithick, black!60}]

\node[stage] (s1) {\textbf{Exact reduction}\; (\S\ref{sec:BB-recurrence})\\[1.5pt]
  $d_{k+1}^i=\dfrac{a_k-\lambda_i}{a_k}\,d_k^i$,
  \qquad
  $(a_k,p_k)=F_\lambda^{k}\bigl(D_\lambda(p_0)\bigr)$\\[1.5pt]
  goal:\ \ $10^{-6}\leq\abs{a_k-\lambda_i}/a_k\leq0.61$ for all $i,k$};

\node[cert, below=of s1] (s2) {\textbf{Certified periodic object}\;
  (\S\ref{sec:certificate}, dimension $4$)\\[1.5pt]
  eight equations $G(u_*)=0$:\\
  seven-cycle $F^{7}(z_*)=z_*$\ $\oplus$\
  admissible landing $F^{15}(D(r_*))=z_*$;\\
  certified $Z_t>0$,\ separations $\abs{a_t-\bar\lambda_i}$,\
  Lyapunov pair $(P,Q_7)$,\ transverse $\tau(\bar\nu)$};

\node[stage, below=of s2] (s3) {\textbf{Attraction}\; (\S\ref{sec:floquet})\\[1.5pt]
  $P\succ0$,\ \ $P-A_7^{T}PA_7\succ0$
  \ $\Longrightarrow$\ $\norm{A_7}_P<1$
  \ $\Longrightarrow$\ the cycle has an open basin};

\node[stage, below=of s3] (s4) {\textbf{Rates and openness in dimension $4$}\;
  (\S\ref{sec:four-open})\\[1.5pt]
  translation $\lambda\mapsto\lambda+4\one$
  \ $\Rightarrow$\ upper factor $0.61$;\\
  certified separation \ $\Rightarrow$\ lower factor $10^{-6}$;\\
  hyperbolicity \ $\Rightarrow$\ open family $L_4\times P_4$};

\node[stage, below=of s4] (s5) {\textbf{Every finite dimension}\;
  (\S\ref{sec:higher-n})\\[1.5pt]
  boundary cycle on an invariant simplex face;\\
  transverse multipliers $\tau(\nu)<0.049$\\
  $\Rightarrow$\ attraction persists\
  $\Rightarrow$\ open $L_n\times P_n$ for each $n\geq4$};

\node[stage, below=of s5] (s6) {\textbf{Lift and measure}\;
  (\S\ref{sec:lift})\\[1.5pt]
  $\Theta_\sigma$,\ projector weights $p(A,g)$,\
  $(A,x)\mapsto(A,Ax-b)$ diffeomorphisms\\
  $\Rightarrow$\ $\Omega_n$ open and nonempty
  \ $\Rightarrow$\ positive measure
  \ $\Rightarrow$\ Theorem~\ref{thm:main}};

\draw[arr] (s1) -- (s2)
  node[tool, right=2.5mm, midway]
  {interval Newton $+$ Banach\ (Lem.~\ref{lem:exact-root})};
\draw[arr] (s2) -- (s3)
  node[tool, right=2.5mm, midway]
  {discrete Lyapunov inequality\ (Lem.~\ref{lem:A-schur},
   \ref{lem:nonlinear-attraction})};
\draw[arr] (s3) -- (s4)
  node[tool, right=2.5mm, midway]
  {translation conjugacy $+$ IFT\ (Lem.~\ref{lem:translation},
   Prop.~\ref{prop:n4-open})};
\draw[arr] (s4) -- (s5)
  node[tool, right=2.5mm, midway]
  {transverse Floquet bound\ (Prop.~\ref{prop:n-open})};
\draw[arr] (s5) -- (s6)
  node[tool, right=2.5mm, midway]
  {three changes of variables\ (\S\ref{sec:lift})};

\end{tikzpicture}
\caption{Map of the proof of Theorem~\ref{thm:main}.  The shaded box is the
computer-assisted step (Certificate~\ref{cert:main}); every other stage is a
pen-and-paper argument that consumes its certified strict inequalities.
Arrow labels name the tool carrying each implication.}
\label{fig:proof-map}
\end{figure}

\section{Exact reduction of BB1 to a projective rational map}
\label{sec:BB-recurrence}

We first derive the scalar recurrence without suppressing the exceptional
index $k=0$.

\begin{lemma}[The BB1 step on a quadratic]\label{lem:bb-step}
Suppose $A\succ0$ and $g_{k-1}\neq0$.  If $x_k=x_{k-1}-\alpha_{k-1}g_{k-1}$
with $\alpha_{k-1}\neq0$, then the BB1 quotient in \eqref{eq:bb1} is positive
and equals
\begin{equation}\label{eq:bb-rayleigh}
 \alpha_k=\frac{g_{k-1}^Tg_{k-1}}{g_{k-1}^TAg_{k-1}}.
\end{equation}
In particular, the initialization \eqref{eq:alpha0} gives $\alpha_1=\alpha_0$.
\end{lemma}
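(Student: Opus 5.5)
The plan is a direct computation from the definitions in \eqref{eq:bb1}, using only the linearity of the gradient $g(x)=Ax-b$. We express $s_{k-1}$ and $y_{k-1}$ in terms of $g_{k-1}$, substitute them into the BB1 quotient, and cancel the common scalar factors.

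\emph{First step.} From $x_k=x_{k-1}-\alpha_{k-1}g_{k-1}$ we read off $s_{k-1}=-\alpha_{k-1}g_{k-1}$. Since $g(x)=Ax-b$ is affine, $y_{k-1}=g_k-g_{k-1}=A(x_k-x_{k-1})=As_{k-1}=-\alpha_{k-1}Ag_{k-1}$.

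\emph{Second step.} Substituting gives
\[
 s_{k-1}^Ts_{k-1}=\alpha_{k-1}^2\,g_{k-1}^Tg_{k-1},\qquad
 s_{k-1}^Ty_{k-1}=s_{k-1}^TAs_{k-1}=\alpha_{k-1}^2\,g_{k-1}^TAg_{k-1}.
\]
The hypotheses $g_{k-1}\neq0$ and $\alpha_{k-1}\neq0$ give $s_{k-1}\neq0$. Positive definiteness of $A$ then gives $s_{k-1}^Ty_{k-1}>0$, so the quotient is well defined, and it is positive because its numerator is also positive. Cancelling the common factor $\alpha_{k-1}^2\neq0$ yields \eqref{eq:bb-rayleigh}. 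The sign of $\alpha_{k-1}$ plays no role, since only its square appears.

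\emph{Third step.} For the consequence, take $k=1$. The hypotheses hold because $x_0\neq x_*$ gives $g_0\neq0$, and $\alpha_0$ in \eqref{eq:alpha0} is positive by $A\succ0$. Formula \eqref{eq:bb-rayleigh} then gives $\alpha_1=g_0^Tg_0/g_0^TAg_0=\alpha_0$.

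There is no real obstacle. The only point needing care is to check explicitly that the denominator $s_{k-1}^Ty_{k-1}$ is nonzero, so that the BB1 quotient is defined; this is where both nonvanishing hypotheses and $A\succ0$ are used.
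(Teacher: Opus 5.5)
Your proposal is correct and follows the paper's proof essentially step by step: write $s_{k-1}=-\alpha_{k-1}g_{k-1}$ and $y_{k-1}=As_{k-1}$, factor $\alpha_{k-1}^2$ out of both $s_{k-1}^Ts_{k-1}$ and $s_{k-1}^Ty_{k-1}$, use $A\succ0$ and $g_{k-1}\neq0$ for positivity, cancel, and specialize to $k=1$. Your explicit check that $x_0\neq x_*$ gives $g_0\neq0$ and $\alpha_0>0$, so that the hypotheses hold at $k=1$, is a small detail the paper leaves implicit.
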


\begin{proof}
Because $g_k=Ax_k-b$ and $g_{k-1}=Ax_{k-1}-b$, subtraction gives
\[
 y_{k-1}=g_k-g_{k-1}=A(x_k-x_{k-1})=As_{k-1}.
\]
The preceding gradient step gives
$s_{k-1}=-\alpha_{k-1}g_{k-1}$.  Therefore
\begin{align*}
 s_{k-1}^Ts_{k-1}
 &=(-\alpha_{k-1}g_{k-1})^T(-\alpha_{k-1}g_{k-1})
 =\alpha_{k-1}^2g_{k-1}^Tg_{k-1},\\
 s_{k-1}^Ty_{k-1}
 &=s_{k-1}^TAs_{k-1}
 =\alpha_{k-1}^2g_{k-1}^TAg_{k-1}.
\end{align*}
Since $A\succ0$ and $g_{k-1}\neq0$, the denominator
$g_{k-1}^TAg_{k-1}$ is strictly positive.  Since
$\alpha_{k-1}^2>0$, cancellation yields \eqref{eq:bb-rayleigh}; both its
numerator and denominator are positive.  Taking $k=1$ and using
\eqref{eq:alpha0} proves $\alpha_1=\alpha_0$.
\end{proof}

Fix an orthogonal diagonalization
\begin{equation}\label{eq:eigendecomp}
 A=Q\Lambda Q^T,
 \qquad \Lambda=\diag(\lambda_1,\ldots,\lambda_n),
 \qquad 0<\lambda_1<\cdots<\lambda_n,
\end{equation}
and define the spectral gradient coordinates
\begin{equation}\label{eq:d-def}
 d_k:=Q^Tg_k,\qquad d_k^i=(Q^Tg_k)_i.
\end{equation}
Because $Q$ is orthogonal,
\begin{equation}\label{eq:rayleigh-coordinates}
 g_k^Tg_k=\sum_{j=1}^n(d_k^j)^2,
 \qquad
 g_k^TAg_k=d_k^T\Lambda d_k
 =\sum_{j=1}^n\lambda_j(d_k^j)^2.
\end{equation}
The gradient update is
\[
 g_{k+1}=A(x_k-\alpha_kg_k)-b
 =(Ax_k-b)-\alpha_kAg_k=(I-\alpha_kA)g_k.
\]
Multiplication by $Q^T$ and use of $Q^TAQ=\Lambda$ give
\begin{equation}\label{eq:d-gradient-step}
 d_{k+1}^i=(1-\alpha_k\lambda_i)d_k^i.
\end{equation}
For $k=0$, substitution of \eqref{eq:alpha0} and
\eqref{eq:rayleigh-coordinates} into \eqref{eq:d-gradient-step} yields
\begin{equation}\label{eq:exceptional-recurrence}
 d_1^i=d_0^i
 \frac{\sum_{j=1}^n(\lambda_j-\lambda_i)(d_0^j)^2}
      {\sum_{j=1}^n\lambda_j(d_0^j)^2}.
\end{equation}
For $k\geq1$, Lemma~\ref{lem:bb-step} instead uses $g_{k-1}$, and hence
\begin{equation}\label{eq:delayed-recurrence}
 d_{k+1}^i=d_k^i
 \frac{\sum_{j=1}^n(\lambda_j-\lambda_i)(d_{k-1}^j)^2}
      {\sum_{j=1}^n\lambda_j(d_{k-1}^j)^2}.
\end{equation}
Equations \eqref{eq:exceptional-recurrence}--\eqref{eq:delayed-recurrence}
are exactly the delayed recurrence certified below.

We now remove the common magnitude of $d_k$.  Whenever $d_k\neq0$, define
\begin{equation}\label{eq:projective-weights}
 x_{k,i}:=(d_k^i)^2,
 \qquad S_k:=\sum_{j=1}^nx_{k,j},
 \qquad p_{k,i}:=\frac{x_{k,i}}{S_k}.
\end{equation}
Then $p_k$ belongs to the simplex
\[
 \Delta_{n-1}:=\left\{p\in\R^n:p_i\geq0,\ \sum_{i=1}^np_i=1\right\}.
\]
For $p\in\Delta_{n-1}$ and $a\in\R$, define
\begin{align}
 \mu_\lambda(p)&:=\sum_{i=1}^n\lambda_ip_i,
 \label{eq:mu}\\
 Z_\lambda(a,p)&:=\sum_{i=1}^np_i(a-\lambda_i)^2,
 \label{eq:Z}\\
 (T_{\lambda,a}(p))_i&:=
 \frac{p_i(a-\lambda_i)^2}{Z_\lambda(a,p)},
 \label{eq:T}\\
 F_\lambda(a,p)&:=\bigl(\mu_\lambda(p),T_{\lambda,a}(p)\bigr),
 \qquad D_\lambda(p):=\bigl(\mu_\lambda(p),p\bigr).
 \label{eq:F-D}
\end{align}
The map $T_{\lambda,a}$ is used only if $Z_\lambda(a,p)>0$.

\begin{lemma}[Exact projective state and multiplier identities]
\label{lem:projective}
Suppose $d_0\neq0$, and let $p_0$ be defined by
\eqref{eq:projective-weights}.  Set
\begin{equation}\label{eq:a-index}
 a_0:=\mu_\lambda(p_0),
 \qquad a_k:=\mu_\lambda(p_{k-1})\quad(k\geq1).
\end{equation}
As long as the recurrence is defined,
\begin{equation}\label{eq:state-iterate}
 (a_k,p_k)=F_\lambda^k(D_\lambda(p_0))\qquad(k\geq0),
\end{equation}
and, for every $i$ and $k\geq0$,
\begin{align}
 d_{k+1}^i&=d_k^i\frac{a_k-\lambda_i}{a_k},
 \label{eq:one-multiplier}\\
 \abs{d_k^i}&=\abs{d_0^i}
 \prod_{t=0}^{k-1}\frac{\abs{a_t-\lambda_i}}{a_t}.
 \label{eq:product-multiplier}
\end{align}
The empty product for $k=0$ is one.  In particular,
$a_1=a_0=\mu_\lambda(p_0)$.
\end{lemma}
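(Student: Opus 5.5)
The plan is to prove the four claims of Lemma~\ref{lem:projective} together by induction on $k$. The only inputs are the gradient recurrence \eqref{eq:d-gradient-step}, the Rayleigh-quotient formulas \eqref{eq:alpha0} and \eqref{eq:bb-rayleigh}, and the coordinate identities \eqref{eq:rayleigh-coordinates}.

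\textbf{Step-size identification.} I first show that the reciprocal step satisfies $\alpha_k^{-1}=a_k$ for every $k\ge0$.
\begin{itemize}
\item For $k=0$: by \eqref{eq:alpha0} and \eqref{eq:rayleigh-coordinates},
\[
\alpha_0^{-1}=\frac{\sum_j\lambda_j (d_0^j)^2}{\sum_j (d_0^j)^2}.
\]
Dividing numerator and denominator by $S_0$ gives $\sum_j\lambda_j p_{0,j}=\mu_\lambda(p_0)=a_0$.
\item For $k\ge1$: Lemma~\ref{lem:bb-step} applies, since $g_{k-1}\neq0$ and $\alpha_{k-1}\neq0$ whenever the recurrence is defined. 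It gives $\alpha_k^{-1}=g_{k-1}^TAg_{k-1}/g_{k-1}^Tg_{k-1}$, and the same normalization yields $\mu_\lambda(p_{k-1})=a_k$.
\item In particular $a_1=\mu_\lambda(p_0)=a_0$, which is the last assertion of the lemma.
\end{itemize}

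\textbf{Multiplier identities.} Substituting $\alpha_k=1/a_k$ into \eqref{eq:d-gradient-step} gives
\[
d_{k+1}^i=(1-\lambda_i/a_k)\,d_k^i,
\]
which is \eqref{eq:one-multiplier}. Here $a_k>0$, because it is a convex combination of positive eigenvalues. Taking absolute values and iterating telescopically gives \eqref{eq:product-multiplier}; the empty product covers $k=0$.

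\textbf{State identity \eqref{eq:state-iterate}.} I argue by induction.
\begin{itemize}
\item Base case: $D_\lambda(p_0)=(\mu_\lambda(p_0),p_0)=(a_0,p_0)$ by definition.
\item Inductive step: assume $(a_k,p_k)=F_\lambda^k(D_\lambda(p_0))$. Then $F_\lambda(a_k,p_k)=(\mu_\lambda(p_k),T_{\lambda,a_k}(p_k))$, and the first coordinate is $a_{k+1}$ by \eqref{eq:a-index}.
\item For the second coordinate, squaring \eqref{eq:one-multiplier} gives $x_{k+1,i}=x_{k,i}(a_k-\lambda_i)^2/a_k^2$. Summing over $i$ gives
\[
S_{k+1}=S_k\,Z_\lambda(a_k,p_k)/a_k^2.
\]
\item Dividing the first expression by the second, the factors $S_k$ and $a_k^{-2}$ cancel, leaving $p_{k+1,i}=p_{k,i}(a_k-\lambda_i)^2/Z_\lambda(a_k,p_k)=(T_{\lambda,a_k}(p_k))_i$.
\end{itemize}

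The only delicate point is the phrase ``as long as the recurrence is defined.'' The relevant condition is $d_{k+1}\neq0$, which is equivalent to $S_{k+1}>0$. By the identity for $S_{k+1}$, this is in turn equivalent to $Z_\lambda(a_k,p_k)>0$, which is exactly the condition under which $T_{\lambda,a_k}$ is defined. So the BB1 iteration continues, and Lemma~\ref{lem:bb-step} applies at the next step, precisely when the projective map is defined. I will state this equivalence explicitly so the induction closes. I expect no real obstacle beyond this bookkeeping of the index shift between the exceptional step $k=0$ and the delayed steps $k\ge1$.
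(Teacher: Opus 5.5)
Your proposal is correct and follows essentially the same route as the paper: identify the multiplier $(a_k-\lambda_i)/a_k$, then square, sum, and divide by $S_{k+1}=S_kZ_\lambda(a_k,p_k)/a_k^2$ inside an induction on $k$. The differences are cosmetic. You substitute $\alpha_k^{-1}=a_k$ directly into \eqref{eq:d-gradient-step} rather than simplifying the quotient in \eqref{eq:delayed-recurrence}, which is the same computation. Your explicit equivalence $d_{k+1}\neq0\iff Z_\lambda(a_k,p_k)>0$ just spells out the paper's closing remark that $Z=0$ is the only possible singularity.
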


\begin{proof}
For $k\geq1$, the scalar quotient in \eqref{eq:delayed-recurrence} is
\begin{align*}
 \frac{\sum_j(\lambda_j-\lambda_i)(d_{k-1}^j)^2}
      {\sum_j\lambda_j(d_{k-1}^j)^2}
 &=\frac{\sum_j\lambda_jx_{k-1,j}
          -\lambda_i\sum_jx_{k-1,j}}
        {\sum_j\lambda_jx_{k-1,j}}\\
 &=\frac{S_{k-1}\sum_j\lambda_jp_{k-1,j}
          -\lambda_iS_{k-1}}
        {S_{k-1}\sum_j\lambda_jp_{k-1,j}}\\
 &=\frac{\mu_\lambda(p_{k-1})-\lambda_i}
        {\mu_\lambda(p_{k-1})}
 =\frac{a_k-\lambda_i}{a_k}.
\end{align*}
The same computation applied to \eqref{eq:exceptional-recurrence}, with
$p_0$ in place of $p_{k-1}$, gives the formula for $k=0$.  This proves
\eqref{eq:one-multiplier}.

Squaring \eqref{eq:one-multiplier} gives
\begin{equation}\label{eq:x-update}
 x_{k+1,i}=x_{k,i}\frac{(a_k-\lambda_i)^2}{a_k^2}.
\end{equation}
Summing \eqref{eq:x-update} over $i$ and using $x_{k,i}=S_kp_{k,i}$ yields
\begin{align*}
 S_{k+1}
 &=\sum_i x_{k,i}\frac{(a_k-\lambda_i)^2}{a_k^2}
 =\frac{S_k}{a_k^2}\sum_ip_{k,i}(a_k-\lambda_i)^2
 =\frac{S_k}{a_k^2}Z_\lambda(a_k,p_k).
\end{align*}
If $Z_\lambda(a_k,p_k)>0$, division of \eqref{eq:x-update} by this identity
gives
\[
 p_{k+1,i}
 =\frac{p_{k,i}(a_k-\lambda_i)^2}{Z_\lambda(a_k,p_k)}
 =(T_{\lambda,a_k}(p_k))_i.
\]
Definition \eqref{eq:a-index} also gives
$a_{k+1}=\mu_\lambda(p_k)$.  Hence
\[
 F_\lambda(a_k,p_k)=(a_{k+1},p_{k+1}).
\]
Since $(a_0,p_0)=D_\lambda(p_0)$, induction on $k$ proves
\eqref{eq:state-iterate}.  Finally, taking absolute values in
\eqref{eq:one-multiplier} and multiplying the identities for
$t=0,\ldots,k-1$ proves \eqref{eq:product-multiplier}.

Because all $\lambda_i>0$ and $p$ is a probability vector,
$\mu_\lambda(p)\geq\lambda_1>0$.  Thus $a_k$ never causes division by zero.
The only remaining possible singularity is $Z_\lambda(a_k,p_k)=0$; the
construction below keeps every orbit in a region where $Z$ is strictly
positive.
\end{proof}

\section{The certified four-dimensional periodic object}
\label{sec:certificate}

The only computer-assisted part of the proof is the validation of one exact
zero of eight rational equations and of several strict inequalities at that
zero. The validation is performed by the supplied script
\texttt{interval\_cert.py}, provided in Appendix \ref{app:interval-certificate}.
The script uses only Python's standard \texttt{decimal} module and contains
all entries of the rational preconditioner and Lyapunov matrices.

\subsection{The eight equations}

Begin with the unshifted four-point spectrum
\begin{equation}\label{eq:unshifted-spectrum}
 \bar\lambda=
 \bigl(1,\ 1.8786699041860466,\ \bar\lambda_3,\ 4\bigr).
\end{equation}
The displayed second eigenvalue is a terminating decimal and is interpreted as
the exact corresponding rational number.  Use the direct state chart
\[
 z=(a,p_1,p_2,p_3),\qquad p_4=1-p_1-p_2-p_3,
\]
and the initialization chart
\[
 r=(r_1,r_2,r_3),\qquad r_4=1-r_1-r_2-r_3.
\]
The unknown is
\[
 u=(\bar\lambda_3,a,p_1,p_2,p_3,r_1,r_2,r_3)\in\R^8.
\]
Define, wherever the iterates are well-defined,
\begin{equation}\label{eq:G}
 G(u):=
 \begin{pmatrix}
  F_{\bar\lambda}^{7}(z)-z\\[2mm]
  F_{\bar\lambda}^{15}(D_{\bar\lambda}(r))-z
 \end{pmatrix}\in\R^8.
\end{equation}
The first four equations require a point fixed by the seven-step map.  The
last four require an exact fifteen-step connection from an admissible BB initial state $D_{\bar\lambda}(r)$.  The landing condition is essential: a periodic state in the four-dimensional $(a,p)$ space need not lie in the forward orbit of the three-dimensional admissible-initialization hypersurface $a=\mu(p)$.

Let $c=(c_1,\ldots,c_8)$ be the exact terminating-decimal vector
\begingroup\small
\begin{align*}
c_1={}&2.71278149106533722827430219171662596122131200973311943581550524557629575811,\\
c_2={}&1.33344967252388129918980034968384933800281196626470897628663867165111253656,\\
c_3={}&0.828108598362557830284706279933642675447754907755381063536594022726890227454,\\
c_4={}&0.167043051370226631389809462759502319781038286313814177038878187525592790929,\\
c_5={}&0.004717957350367412952812533939501994168806798103523267952088050658941184644,\\
c_6={}&0.0000134113750442893933116720965327494222816004773939806475307653666721805974245,\\
c_7={}&0.989171705769078149047269711515135535847128223427916068474163743526926233731,\\
c_8={}&0.00000349260028439145589133467903469364875523517774851271154130219629904643379924.
\end{align*}
\endgroup
Set
\begin{equation}\label{eq:X-box}
 X:=c+[-10^{-55},10^{-55}]^8.
\end{equation}
At the center, and uniformly throughout $X$, the eliminated probabilities
satisfy
\begin{align}
 p_4&\in
 [0.0001303929168481253726717233673530106024000078272814911,\notag\\[-1mm]
 &\hspace{35mm}0.0001303929168481253726717233673530106024000078272814918],
 \label{eq:p4-box}\\
 r_4&\in
 [0.0108113902555931701035272817092970210818349409169414378,\notag\\[-1mm]
 &\hspace{35mm}0.0108113902555931701035272817092970210818349409169414385].
 \label{eq:r4-box}
\end{align}
Thus $p$ and $r$ are in the relative interior of $\Delta_3$, and
$1<1.8786699041860466<\bar\lambda_3<4$ throughout $X$.

\subsection{What the interval calculation proves}

\begin{certificate}[Validated root, stability, and separation]
\label{cert:main}
With $G$, $c$, and $X$ defined above, the supplied directed-rounding
calculation proves all of the following statements.

\begin{enumerate}[label=\textup{(C\arabic*)}]
\item\label{cert:K}
There is an explicitly recorded exact rational matrix $R\in\R^{8\times8}$
such that, with $H(u)=u-RG(u)$,
\begin{equation}\label{eq:K-inclusion}
 H(X)\subset
 c-RG(c)+\bigl(I-R[DG(X)]\bigr)(X-c)
 \subset\intt X.
\end{equation}
The largest coordinate ratio of the final box radius to $10^{-55}$ is less
than
\[
 3.876379\times10^{-20}.
\]

\item\label{cert:contraction}
The same derivative enclosure satisfies
\begin{equation}\label{eq:H-contraction-cert}
 \sup_{u\in X}\norm{DH(u)}_\infty
 \leq \norm{I-R[DG(X)]}_\infty
 <7.200691\times10^{-27}<1.
\end{equation}

\item\label{cert:denominators}
Every normalization denominator occurring in the seven cycle steps and the
fifteen connection steps is positive.  More explicitly, the same outward
evaluation gives
\begin{equation}\label{eq:Z-bounds}
 \min_{0\leq t<7}Z_t>0.1516357118221540,
 \qquad
 \min_{0\leq t<15}Z_t^{\rm conn}>0.0009510722601452239.
\end{equation}

\item\label{cert:separation}
For every exact $u\in X$, the finite connection and cycle obey the uniform
separation bounds
\begin{align}
 \min_{\substack{0\leq t\leq15\\1\leq i\leq4}}
 \abs{a_t-\bar\lambda_i}
 &>1.3886903675\times10^{-4},
 \label{eq:transient-separation}\\
 \min_{\substack{0\leq t<7\\1\leq i\leq4}}
 \abs{a_t-\bar\lambda_i}
 &>7.5691389311\times10^{-2}.
 \label{eq:cycle-separation}
\end{align}
Both statements remain true, with the same displayed lower bounds, when an
additional test eigenvalue $\bar\nu\in[3.49,3.51]$ is included in the minimum.

\item\label{cert:lyapunov}
Let $A_7=D_zF_{\bar\lambda}^{7}(z)$, with $z$ ranging over the $z$-part of
$X$.  The script records an exact symmetric rational matrix $P$ and proves,
for the exact root obtained below,
\begin{equation}\label{eq:lyapunov-ineq}
 P\succ0,
 \qquad Q_7:=P-A_7^TPA_7\succ0.
\end{equation}
The outward-rounded $LDL^T$ pivots for $P$ have the positive lower bounds
\begin{equation}\label{eq:P-pivots}
 977656.6634223271,\quad
 65683881.36341604,\quad
 9.217413243679623,\quad
 323.9590422006289,
\end{equation}
and the row diagonal-dominance margins for $Q_7$ have the lower bounds
\begin{equation}\label{eq:Q-margins}
 \begin{split}
 0.9999999999999999999976374,\quad
 0.9999999999999999994270558,\\
 0.9999999999999999994222700,\quad
 0.9999999999999999993948258.
 \end{split}
\end{equation}

\item\label{cert:transverse}
For every $\bar\nu\in[3.49,3.51]$, the seven-step transverse multiplier
defined in \eqref{eq:tau} below satisfies
\begin{equation}\label{eq:tau-cert}
 0.0230006289035345
 <\tau(\bar\nu)
 <0.0482036188628888<1.
\end{equation}

\item\label{cert:period}
For the first two phases of the cycle,
\begin{equation}\label{eq:phase-difference}
 a_0-a_1>0.1782019618055700634.
\end{equation}
\end{enumerate}
\end{certificate}

These machine statements are rigorous real inequalities by the standard
directed-rounding validation principle
\citep{krawczyk1969newton,rump2010verification}.  A decimal literal is first
converted to a \texttt{Decimal} exactly.  For each addition, subtraction,
multiplication, and reciprocal, the script evaluates once with rounding
toward $-\infty$ and once with rounding toward $+\infty$; if the input
intervals contain the exact real inputs, the output interval therefore
contains the exact result of that elementary operation.  Structural induction
over an arithmetic expression then shows that its interval evaluation
encloses its entire real range over the input box.  The
automatic-differentiation class stores a pair $(v,d)$ of an interval value
and a vector of interval partial derivatives and propagates them by the exact
identities $D(u+v)=Du+Dv$, $D(uv)=vDu+uDv$, $D(u^{-1})=-u^{-2}Du$; the same
induction, applied simultaneously to values and derivatives, shows that the
computed matrix $[DG(X)]$ contains $DG(u)$ entrywise for every $u\in X$.
Each reciprocal operation asserts that its denominator interval excludes
zero, and the explicit positive bounds \eqref{eq:Z-bounds} strengthen this
check.  All magnitudes used are far from the decimal exponent limits, so no
overflow or underflow is involved.

The centered enclosure in \eqref{eq:K-inclusion} follows from the identity
\begin{align*}
 H(u)-c
 &=H(c)-c+\int_0^1DH(c+t(u-c))(u-c)\,dt\\
 &=-RG(c)+\int_0^1\bigl(I-RDG(c+t(u-c))\bigr)(u-c)\,dt.
\end{align*}
valid because $X$ is convex and $H$ is $C^1$ on a neighborhood of $X$.
Every derivative matrix in the integral belongs entrywise to $I-R[DG(X)]$,
and interval matrix--box multiplication encloses every product with
$u-c\in X-c$ as well as its average over $t\in[0,1]$.  The second inclusion
in \eqref{eq:K-inclusion} is the directed-rounding output.

We next turn the interval statements into exact analytic conclusions.

\begin{figure}[t]
\centering
\includegraphics[width=\textwidth]{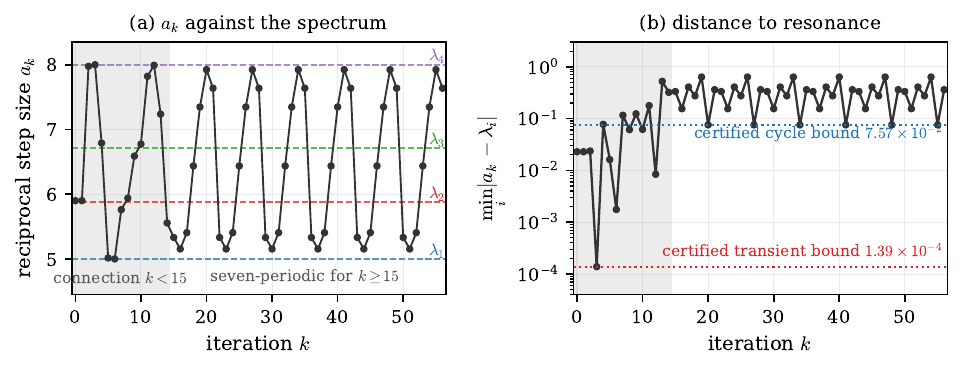}
\caption{The certified projective orbit at the central parameter.
\emph{(a)} The reciprocal step size $a_k$ against the eigenvalues: the
admissible initialization lands on the seven-cycle after exactly fifteen
steps and then repeats.  \emph{(b)} The distance from $a_k$ to the nearest
eigenvalue.  The certified separation bounds of
Certificate~\ref{cert:main}\ref{cert:separation} are sharp for this orbit:
the transient minimum at $k=3$ and the cycle minimum match the certified
lower bounds to the displayed digits.}
\label{fig:certified-orbit}
\end{figure}

\begin{lemma}[Existence and uniqueness of an exact solution]
\label{lem:exact-root}
There is a unique $u_*\in X$ satisfying $G(u_*)=0$.
\end{lemma}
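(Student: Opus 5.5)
The plan is to apply the Banach fixed-point theorem to the Newton-like map $H(u)=u-RG(u)$ on the compact convex box $X$, and then to transfer fixed points of $H$ back to zeros of $G$ using invertibility of $R$.

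\textbf{Well-definedness and regularity.} First I check that $G$, hence $H$, is defined and $C^1$ on an open neighbourhood of $X$. By Certificate~\ref{cert:main}\ref{cert:denominators}, every normalization denominator $Z_t$ along the seven cycle steps and the fifteen connection steps is bounded below by a positive constant uniformly on $X$. Every $a_t=\mu_{\bar\lambda}(p)$ is at least $\bar\lambda_1=1>0$. So every iterate in \eqref{eq:G} is a composition of rational maps with nonvanishing denominators, and $G$ is real-analytic on a neighbourhood of $X$. Indeed, the strict positivity on the compact box persists on a small open enlargement.

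\textbf{Banach contraction.} Certificate~\ref{cert:main}\ref{cert:K} gives $H(X)\subset\intt X\subset X$, so $H$ is a self-map of $X$. Since $X$ is convex, the mean value inequality together with Certificate~\ref{cert:main}\ref{cert:contraction} gives
\[
\norm{H(u)-H(v)}_\infty\leq 7.2\times10^{-27}\norm{u-v}_\infty
\quad\text{for all } u,v\in X,
\]
so $H$ is a contraction on the complete metric space $(X,\norm{\cdot}_\infty)$. Banach's theorem then yields a unique $u_*\in X$ with $H(u_*)=u_*$, that is, $RG(u_*)=0$.

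\textbf{From fixed points of $H$ to zeros of $G$.} This step is the only subtle point: it requires $R$ to be nonsingular. I would derive this from the certified bound rather than from a separate computation. Pick any $u\in X$. Then $\norm{I-RDG(u)}_\infty<1$, so $RDG(u)$ is invertible by the Neumann series, and therefore $R$ is invertible. Consequently $G(u_*)=0$. Conversely, any zero of $G$ in $X$ is a fixed point of $H$ in $X$, so uniqueness of the zero follows from uniqueness of the fixed point.

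I expect no real obstacle here. All the difficulty lies in the interval certificate itself; the only care needed is the invertibility of $R$ and the regularity of $G$ near $X$.
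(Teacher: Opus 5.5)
Your proposal is correct and follows the paper's proof step for step. The steps are the same: Banach's theorem for $H$ on $X$ via the self-map inclusion, $C^1$ regularity from the positive denominators, and the mean-value bound $\sup_X\norm{DH}_\infty<1$, then invertibility of $R$ and the converse argument for uniqueness of the zero. The differences are cosmetic. You obtain invertibility of $R$ from invertibility of $RDG(u)$ via the Neumann series, where the paper uses a left-null-vector/spectral-radius argument; both work. Also, you should quote the certified constant $7.200691\times10^{-27}$ rather than the slightly smaller, uncertified $7.2\times10^{-27}$, though any constant below $1$ suffices.
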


\begin{proof}
The box $X$ is a nonempty closed subset of $(\R^8,\norm{\cdot}_\infty)$ and
hence a complete metric space.
Certificate~\ref{cert:main}\ref{cert:K} gives $H(X)\subset\intt X\subset X$,
so $H$ maps $X$ into itself, and by
Certificate~\ref{cert:main}\ref{cert:denominators} every denominator is
nonzero on an open neighborhood of $X$, so $H$ is $C^1$ there.  For
$u,v\in X$ the segment $[v,u]$ lies in the convex set $X$, so the mean-value
inequality together with Certificate~\ref{cert:main}\ref{cert:contraction}
gives
\[
 \norm{H(u)-H(v)}_\infty
 \leq(7.200691\times10^{-27})\norm{u-v}_\infty.
\]
Banach's fixed-point theorem yields a unique $u_*\in X$ with $H(u_*)=u_*$.

The bound \eqref{eq:H-contraction-cert} also forces $R$ to be invertible: if
$y^TR=0$ for some $y\neq0$, then $y^T(I-RDG(u))=y^T$, so $1$ would be an
eigenvalue of $(I-RDG(u))^T$, contradicting
$\rhoSp(I-RDG(u))\leq\norm{I-RDG(u)}_\infty<1$.  Hence $H(u_*)=u_*$ reads
$RG(u_*)=0$ and gives $G(u_*)=0$.  Conversely, every zero of $G$ in $X$ is a
fixed point of $H$, so $u_*$ is the only such zero.
\end{proof}

Write the exact components as
\begin{equation}\label{eq:exact-components}
 u_*=(\bar\lambda_3^*,z_*,r_*),
 \qquad z_*=(a_*,p_*).
\end{equation}
Unpacking $G(u_*)=0$ gives the exact, rather than numerical, identities
\begin{equation}\label{eq:cycle-landing}
 F_{\bar\lambda}^{7}(z_*)=z_*,
 \qquad
 F_{\bar\lambda}^{15}(D_{\bar\lambda}(r_*))=z_*.
\end{equation}
The third eigenvalue lies in the radius-$10^{-55}$ interval centered at
$c_1$, so in particular
\[
 \bar\lambda_3^*=2.71278149106533722827430219171662596122\ldots.
\]
Equation \eqref{eq:phase-difference} shows $F_{\bar\lambda}(z_*)\neq z_*$.  The least period of $z_*$ divides $7$; since $7$ is prime and the period is not $1$, it equals $7$.

\section{Floquet stability and nonlinear attraction}
\label{sec:floquet}

Let
\begin{equation}\label{eq:A7}
 A_7:=D_zF_{\bar\lambda}^{7}(z_*).
\end{equation}
The derivative exists because all seven denominators are positive. We now derive every stability implication of Certificate \ref{cert:main}\ref{cert:lyapunov}.

\begin{lemma}[The certified matrices are positive definite]
\label{lem:PQ-positive}
The exact matrices $P$ and $Q_7=P-A_7^TPA_7$ satisfy $P\succ0$ and
$Q_7\succ0$.
\end{lemma}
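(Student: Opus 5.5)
The plan is to turn the interval outputs of Certificate~\ref{cert:main}\ref{cert:lyapunov} into exact statements about the true matrices. The key point is that the true matrices $P$ (exact, rational) and $A_7=D_zF_{\bar\lambda}^7(z_*)$ (evaluated at the exact root $u_*\in X$ from Lemma~\ref{lem:exact-root}) are enclosed by the computed interval objects.

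For $P$, we will use the outward-rounded $LDL^T$ factorization. Gaussian elimination without pivoting on a symmetric matrix produces pivots $d_1,\dots,d_4$ with $d_1\cdots d_k=\det P_{[1:k]}$, the $k$-th leading principal minor. The directed-rounding induction shows that each computed pivot interval contains the exact pivot of the exact rational $P$. Since the lower bounds in \eqref{eq:P-pivots} are positive, each exact pivot is positive, so every leading principal minor is positive. Sylvester's criterion then gives $P\succ0$. Equivalently, $P=LDL^T$ with $L$ unit lower triangular and $D$ a positive diagonal, which is a congruence to a positive diagonal matrix.

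For $Q_7$, we first note that $u_*\in X$ implies $z_*$ lies in the $z$-part of $X$. By the automatic-differentiation enclosure principle, the computed interval matrix $[A_7]$ contains $A_7$ entrywise; this is valid because every $Z_t>0$ by \eqref{eq:Z-bounds}, so $F^7$ is $C^1$ there. Interval matrix arithmetic then gives an interval matrix $[Q_7]\ni Q_7$ entrywise.

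Next we read \eqref{eq:Q-margins} as the statement that, for each row $i$, the lower bound of the diagonal entry minus the upper bound of $\sum_{j\ne i}|(Q_7)_{ij}|$ exceeds the displayed positive number. A plausible scaling has been applied so that the margins are near one, for example a congruence $S^TQ_7S$ with $S$ exact and invertible; congruence preserves definiteness, so this does not affect the argument. The exact $Q_7$ is symmetric, since $P$ is symmetric, and it is strictly diagonally dominant with positive diagonal. By Gershgorin's theorem every eigenvalue lies in $\bigcup_i[q_{ii}-\sum_{j\ne i}|q_{ij}|,\ \cdot\,]\subset(0,\infty)$, so $Q_7\succ0$.

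The main obstacle is making the enclosure argument airtight for $Q_7$. The certificate only encloses $A_7$ over the whole box, and it is the exact root's derivative that matters, so we must check that the enclosure was taken over a set containing $z_*$; this holds because it is the full $z$-part of $X$. We must also check that the margin computation used outward rounding on every product $A_7^TPA_7$. Once this inclusion chain (exact value $\in$ interval result) is stated via the structural induction already described after Certificate~\ref{cert:main}, both definiteness conclusions are purely algebraic.
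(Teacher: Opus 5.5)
Your proposal is correct and is essentially the paper's proof. The outward-rounded $LDL^T$ recursion with positive pivot lower bounds gives $P\succ0$; your Sylvester-minor phrasing is equivalent to the paper's congruence $P=LDL^T$ with positive $D$. Strict row diagonal dominance of the symmetric matrix $Q_7$, whose entries are enclosed because $u_*\in X$, then gives $Q_7\succ0$ by Gershgorin.

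Two small remarks. Your guess about a congruence scaling is unnecessary: the script tests the rows of $Q=P-A^TPA$ directly, and margins near one are simply consistent with $Q_7\approx I$. Also, the enclosure of $A_7$ must range over the $\bar\lambda_3$-component of $X$ as well as its $z$-part. It does, since all eight coordinates are interval variables in the script.
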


\begin{proof}
The interval $LDL^T$ recursion is performed without pivoting; every division
is valid because each preceding pivot interval is positive.  It encloses an
exact factorization $P=LDL^T$ with $L$ unit lower triangular and the four
diagonal entries of $D$ bounded below by the positive numbers in
\eqref{eq:P-pivots}; hence $P\succ0$.  The matrix $Q_7$ is symmetric, and
the interval calculation proves strict row diagonal dominance,
\begin{equation}\label{eq:strict-dd}
 (Q_7)_{ii}-\sum_{j\neq i}\abs{(Q_7)_{ij}}\geq m_i>0,
\end{equation}
with the margins $m_i$ bounded below by \eqref{eq:Q-margins}; in particular
every diagonal entry is positive.  By Gershgorin's circle theorem, every
eigenvalue of $Q_7$ is positive, so $Q_7\succ0$.
\end{proof}

\begin{lemma}[Schur stability of the seven-step derivative]
\label{lem:A-schur}
Every complex eigenvalue of $A_7$ has modulus strictly smaller than one.
\end{lemma}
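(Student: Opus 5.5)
The plan is the standard discrete Lyapunov argument, using the two positive definite matrices provided by Lemma~\ref{lem:PQ-positive}. Since $A_7$ is real but may have complex eigenvalues, I will work over $\mathbb C^4$ with the Hermitian form $w^*Pw$. This form is positive for $w\neq0$ because $P$ is real symmetric positive definite: writing $w=u+iv$ gives $w^*Pw=u^TPu+v^TPv$. The same computation applies to $Q_7$.

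Let $\theta\in\mathbb C$ be an eigenvalue of $A_7$ with eigenvector $w\neq0$, so $A_7w=\theta w$. Then
\[
 w^*Q_7w=w^*Pw-(A_7w)^*P(A_7w)=(1-\abs{\theta}^2)\,w^*Pw .
\]
Both $w^*Q_7w$ and $w^*Pw$ are strictly positive by Lemma~\ref{lem:PQ-positive} and the remark above. Hence $1-\abs{\theta}^2>0$, that is, $\abs{\theta}<1$.

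As a quantitative by-product, useful for Lemma~\ref{lem:nonlinear-attraction}, I would also record that $\norm{A_7}_P<1$, where $\norm{w}_P=(w^TPw)^{1/2}$. The inequality $Q_7\succ0$ gives $\norm{A_7w}_P^2=\norm{w}_P^2-w^TQ_7w\leq(1-\lambda_{\min}(Q_7)/\lambda_{\max}(P))\norm{w}_P^2$. The spectral-radius statement then also follows from $\rhoSp(A_7)\leq\norm{A_7}_P$.

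The only point that needs care is that the certified inequalities hold for the exact matrix $A_7=D_zF^7_{\bar\lambda}(z_*)$. This is covered because the certificate encloses $A_7$ over the whole $z$-part of $X$, and $z_*\in X$ by Lemma~\ref{lem:exact-root}. Beyond that, I expect no real obstacle.
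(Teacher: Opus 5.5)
Your proof is correct and is essentially the paper's argument: the eigenvector identity $w^*Q_7w=(1-\abs{\theta}^2)\,w^*Pw$, with both Hermitian forms strictly positive by Lemma~\ref{lem:PQ-positive}, forces $\abs{\theta}<1$. The extra bound $\norm{A_7}_P<1$ you record is the same fact the paper derives separately, using a compactness constant $\delta$, in Lemma~\ref{lem:nonlinear-attraction}.
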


\begin{proof}
Let $v\in\mathbb C^4\setminus\{0\}$ and $\zeta\in\mathbb C$ satisfy
$A_7v=\zeta v$.  A real symmetric positive-definite matrix defines a positive
Hermitian form on $\mathbb C^4$, so $v^*Pv>0$ and $v^*Q_7v>0$.  Using that
$A_7$ and $P$ are real,
\[
 v^*Q_7v
 =v^*Pv-(A_7v)^*P(A_7v)
 =(1-\abs{\zeta}^2)v^*Pv.
\]
Both sides are strictly positive, so $\abs{\zeta}<1$.
\end{proof}

\begin{lemma}[Nonlinear attraction in an adapted norm]
\label{lem:nonlinear-attraction}
The point $z_*$ is a locally attracting fixed point of
$\Phi:=F_{\bar\lambda}^{7}$.
\end{lemma}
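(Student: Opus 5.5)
We use the $P$-norm $\norm{w}_P:=(w^TPw)^{1/2}$ on the four-dimensional chart $z=(a,p_1,p_2,p_3)$, which is a genuine norm by Lemma~\ref{lem:PQ-positive}. The aim is a strict contraction estimate for $\Phi=F_{\bar\lambda}^7$ near $z_*$; attraction then follows by induction.

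\emph{Step 1: smoothness near $z_*$.} At $z_*$ all seven normalization denominators satisfy $Z_t>0.1516$ by Certificate~\ref{cert:main}\ref{cert:denominators}. Each map $F_{\bar\lambda}$ is rational with denominators $Z_\lambda(a,p)$, and by continuity these stay positive on a neighborhood $U$ of $z_*$. Hence $\Phi$ is $C^1$ (indeed real-analytic) on $U$, and $\Phi(z_*)=z_*$ by \eqref{eq:cycle-landing}.

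\emph{Step 2: linear contraction in the adapted norm.} For $w\neq0$,
\[
 \norm{A_7w}_P^2=w^TPw-w^TQ_7w\leq\Bigl(1-\tfrac{\lambda_{\min}(Q_7)}{\lambda_{\max}(P)}\Bigr)\norm{w}_P^2 .
\]
The coefficient is strictly less than one by Lemma~\ref{lem:PQ-positive}, so $\norm{A_7}_P=:\theta<1$. This uses the Lyapunov inequality directly rather than only the spectral conclusion of Lemma~\ref{lem:A-schur}, although either route works.

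\emph{Step 3: nonlinear estimate.} Fix $\theta'\in(\theta,1)$. By continuity of $D\Phi$ at $z_*$, choose $\delta>0$ so that the closed $P$-ball $B_\delta=\{z:\norm{z-z_*}_P\leq\delta\}$ lies in $U$ and $\norm{D\Phi(z)}_P\leq\theta'$ on $B_\delta$. The ball is convex, so the mean-value inequality gives
\[
 \norm{\Phi(z)-z_*}_P=\norm{\Phi(z)-\Phi(z_*)}_P\leq\theta'\norm{z-z_*}_P\qquad(z\in B_\delta).
\]
Consequently $\Phi(B_\delta)\subset B_{\theta'\delta}\subset B_\delta$. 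Induction then yields $\norm{\Phi^m(z)-z_*}_P\leq\theta'^m\norm{z-z_*}_P\to0$, and the whole orbit stays in the region where the denominators are positive, so the iteration remains well defined.

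I would also record that the intermediate iterates $F^j(z)$, for $j<7$, stay close to the cycle points $F^j(z_*)$ uniformly. This follows from continuity of the finitely many maps on the compact ball, and it gives an open basin for the whole seven-cycle, which later sections need.

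\emph{Main obstacle.} The only delicate point is justifying that $\Phi$ is $C^1$ on a full neighborhood of the exact point $z_*$, and not merely defined at it. The certificate bounds $Z_t$ only on $X$. However, $X$ contains the exact root, and strict positivity is an open condition, so continuity settles this. Beyond that, the argument is a standard adapted-norm proof that a fixed point with Schur-stable derivative is locally attracting.
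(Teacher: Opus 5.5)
Your proposal is correct and matches the paper's proof essentially step for step: the adapted $P$-norm, $\norm{A_7}_P<1$ from $Q_7\succ0$, $C^1$ smoothness near $z_*$ from the positive denominators, and the mean-value inequality on a small convex $P$-ball, which gives forward invariance and geometric convergence. The only difference is cosmetic: you bound the contraction factor by $1-\lambda_{\min}(Q_7)/\lambda_{\max}(P)$, whereas the paper uses the minimum of $x^TQ_7x$ on the $P$-unit sphere; your bound is slightly cruder but equally valid.
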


\begin{proof}
Define $\norm{x}_P=(x^TPx)^{1/2}$.  Since $P\succ0$, this is a norm.  On the
compact $P$-unit sphere $S_P=\{x:x^TPx=1\}$ the continuous function
$x\mapsto x^TQ_7x$ is strictly positive by Lemma~\ref{lem:PQ-positive} and
therefore has a positive minimum $\delta>0$; by homogeneity,
\begin{equation}\label{eq:Q-delta-P}
 x^TQ_7x\geq\delta x^TPx\qquad\text{for every }x\in\R^4.
\end{equation}
Since $Q_7=P-A_7^TPA_7$, \eqref{eq:Q-delta-P} gives
\[
 \norm{A_7x}_P^2
 =x^T(P-Q_7)x
 \leq(1-\delta)\norm{x}_P^2,
\]
and the left side is nonnegative, so $0<\delta\leq1$ and
\begin{equation}\label{eq:A-P-contract}
 \norm{A_7}_P\leq q_0:=\sqrt{1-\delta}<1.
\end{equation}

All normalization denominators at the cycle phases are positive by
\eqref{eq:Z-bounds} and remain positive on a small Euclidean neighborhood of
each phase, so $\Phi$ is $C^1$ near $z_*$ with continuous derivative and
$D\Phi(z_*)=A_7$.  Choose any $q$ with $q_0<q<1$.  Continuity of $D\Phi$ in
the induced $P$-norm gives $\varepsilon>0$ such that
\begin{equation}\label{eq:Dphi-local}
 \norm{D\Phi(z)}_P\leq q
 \quad\text{whenever}\quad \norm{z-z_*}_P\leq\varepsilon.
\end{equation}
The closed $P$-ball in \eqref{eq:Dphi-local} is convex, so for any $z$ in
this ball the mean-value inequality gives
\[
 \norm{\Phi(z)-z_*}_P
 =\norm{\Phi(z)-\Phi(z_*)}_P
 \leq q\norm{z-z_*}_P.
\]
Thus the ball is forward invariant under $\Phi$, and induction gives
$\norm{\Phi^m(z)-z_*}_P\leq q^m\norm{z-z_*}_P\to0$.
\end{proof}

\section{Translation and an open four-dimensional family}
\label{sec:four-open}

The projective dynamics depends only on differences $a-\lambda_i$, whereas
the original BB multiplier also contains the denominator $a$.  A spectral
translation exploits this distinction.

\begin{lemma}[Translation conjugacy]\label{lem:translation}
For $s\in\R$, let $\lambda'=\lambda+s\one$ and
$C_s(a,p)=(a+s,p)$.  Wherever the maps are defined,
\begin{equation}\label{eq:translation-conjugacy}
 F_{\lambda'}\circ C_s=C_s\circ F_\lambda,
 \qquad D_{\lambda'}=C_s\circ D_\lambda.
\end{equation}
\end{lemma}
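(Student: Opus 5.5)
The plan is to check the two identities in \eqref{eq:translation-conjugacy} directly from the definitions \eqref{eq:mu}--\eqref{eq:F-D}. Everything rests on two facts: a translation of the spectrum moves the weighted mean by the same amount, and the normalization map sees only the differences $a-\lambda_i$.

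\textbf{Step 1: the mean.} Since $p\in\Delta_{n-1}$ satisfies $\sum_ip_i=1$, we have
\[
 \mu_{\lambda'}(p)=\sum_i(\lambda_i+s)p_i=\mu_\lambda(p)+s .
\]
This immediately gives $D_{\lambda'}(p)=(\mu_\lambda(p)+s,p)=C_s(D_\lambda(p))$.

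\textbf{Step 2: the weight map.} The key cancellation is $(a+s)-(\lambda_i+s)=a-\lambda_i$. It yields
\[
 Z_{\lambda'}(a+s,p)=Z_\lambda(a,p),
\]
so one side is positive exactly when the other is. This matches the domains, which is the meaning of ``wherever the maps are defined.'' Each numerator $p_i(a-\lambda_i)^2$ is also unchanged, so $T_{\lambda',a+s}(p)=T_{\lambda,a}(p)$.

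\textbf{Step 3: assemble.} Combining the two steps,
\[
 F_{\lambda'}(C_s(a,p))=\bigl(\mu_\lambda(p)+s,\;T_{\lambda,a}(p)\bigr)=C_s(F_\lambda(a,p)).
\]

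There is no real obstacle. The only point needing care is the domain bookkeeping: $T$ is used only when $Z>0$, and Step 2 shows that this positivity condition is itself translation invariant. Iterating the conjugacy gives $F_{\lambda'}^k\circ C_s=C_s\circ F_\lambda^k$. This is the form presumably used afterwards to shift the certified cycle by $s=4$, while the projective orbit and the differences $|a_t-\lambda_i|$ stay the same.
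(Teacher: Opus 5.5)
Your proposal is correct and follows essentially the same argument as the paper: the mean shifts by $s$ because $\sum_i p_i=1$, and $Z$ and the normalized weights are unchanged because only the differences $a-\lambda_i$ enter. Substituting these into the definitions of $F_\lambda$ and $D_\lambda$ gives both identities. Your remark that $Z_{\lambda'}(a+s,p)=Z_\lambda(a,p)$ also matches the domains of definition is a small extra that the paper leaves implicit.
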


\begin{proof}
Because $\sum_ip_i=1$,
\[
 \mu_{\lambda'}(p)=\sum_i(\lambda_i+s)p_i
 =\mu_\lambda(p)+s.
\]
Also $(a+s)-(\lambda_i+s)=a-\lambda_i$, so
\[
 Z_{\lambda'}(a+s,p)=\sum_ip_i((a+s)-(\lambda_i+s))^2
 =Z_\lambda(a,p),
\]
and every normalized weight in \eqref{eq:T} is unchanged.  Substitution into
\eqref{eq:F-D} proves both identities.
\end{proof}

Take $s=4$.  The shifted central spectrum is
\begin{equation}\label{eq:shifted-spectrum}
 \lambda^*=
 \bigl(5,\ 5.8786699041860466,\ 6.712781491065337228\ldots,\ 8\bigr).
\end{equation}
Translation leaves the projective weights, all differences, all $Z$ values,
and all state derivatives unchanged; it adds four only to the scalar $a$.
Let $z_*^+=C_4(z_*)$.  Then
\begin{equation}\label{eq:shifted-cycle-connection}
 F_{\lambda^*}^{7}(z_*^+)=z_*^+,
 \qquad
 F_{\lambda^*}^{15}(D_{\lambda^*}(r_*))=z_*^+.
\end{equation}

\begin{proposition}[Uniform factor bounds in dimension four]
\label{prop:n4-open}
There are nonempty open neighborhoods
\[
 L_4\subset\{\lambda\in\R^4:0<\lambda_1<\cdots<\lambda_4\}
 \quad\text{and}\quad
 P_4\subset\intt\Delta_3
\]
of $\lambda^*$ and $r_*$, respectively, such that for every
$(\lambda,p_0)\in L_4\times P_4$, the orbit
$(a_k,p_k)=F_\lambda^k(D_\lambda(p_0))$ is defined for all $k$ and
\begin{equation}\label{eq:factor-n4}
 10^{-6}\leq\frac{\abs{a_k-\lambda_i}}{a_k}\leq0.61
 \qquad(1\leq i\leq4,\ k\geq0).
\end{equation}
\end{proposition}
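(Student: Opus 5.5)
The proof splits the orbit into a finite transient and a tail. The transient is handled by finite-time continuity. The tail is handled by continuing the hyperbolic seven-cycle to nearby spectra and using a uniform trapping tube around it. The two factor bounds come from different sources. The upper bound $0.61$ is purely geometric: each $a_k=\mu_\lambda(p_{k-1})$ (or $\mu_\lambda(p_0)$ for $k=0$) is a convex combination of eigenvalues, so $a_k\in[\lambda_1,\lambda_4]$. This gives $\abs{a_k-\lambda_i}\le\lambda_4-\lambda_1$ and $\abs{a_k-\lambda_i}/a_k\le(\lambda_4-\lambda_1)/\lambda_1$. Taking $L_4$ inside $(4.99,5.01)\times\R\times\R\times(7.99,8.01)$ gives the bound $3.02/4.99<0.61$ for every orbit, whatever its position. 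The lower bound $10^{-6}$ needs $\abs{a_k-\lambda_i}\ge10^{-5}$ together with $a_k\le\lambda_4<8.01$. The real task is therefore to keep every orbit a distance $10^{-5}$ from the spectrum for all $k$, and to keep every $Z$ positive so that the orbit is defined.

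\emph{Continuation of the cycle.} By Lemma~\ref{lem:translation}, $z_*^+$ is a period-$7$ point of $F_{\lambda^*}$, and $D_zF^7_{\lambda^*}(z_*^+)=A_7$, since translation leaves all state derivatives unchanged. By Lemma~\ref{lem:A-schur}, $1$ is not an eigenvalue of $A_7$, so $I-A_7$ is invertible. The map $(\lambda,z)\mapsto F^7_\lambda(z)-z$ is $C^1$ near $(\lambda^*,z_*^+)$ because all $Z_t>0.15$ there. The implicit function theorem then gives a $C^1$ branch $z(\lambda)$ of period-$7$ points. Next I redo Lemma~\ref{lem:nonlinear-attraction} uniformly in $\lambda$. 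Joint continuity of $D_zF^7_\lambda$ in $(\lambda,z)$ gives $\varepsilon>0$, $q<1$ and a neighborhood $L'$ of $\lambda^*$ such that $\norm{D_zF^7_\lambda(z)}_P\le q$ on the $P$-ball $B$ of radius $2\varepsilon$ about $z_*^+$, for $\lambda\in L'$. Shrinking $L'$ so that $\norm{z(\lambda)-z_*^+}_P<\varepsilon$, the $P$-ball of radius $\varepsilon$ about $z(\lambda)$ lies in $B$ and is forward invariant under $F^7_\lambda$.

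\emph{The tube.} Define $N$ to be the union of this $P$-ball and its first six images. Shrinking $\varepsilon$ and $L'$ and using compactness and continuity, I can ensure that on $N$ every $Z>0.1$ and every $\abs{a-\lambda_i}>0.07$. This works because the cycle separation is $0.0757$ by \eqref{eq:cycle-separation}, and these quantities are continuous in $(\lambda,z)$. Note that the $a$-coordinate of a state at phase $t$ is exactly the $a_k$ used in the multiplier.

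\emph{Transient.} Equation \eqref{eq:shifted-cycle-connection} gives $F^{15}_{\lambda^*}(D_{\lambda^*}(r_*))=z_*^+$, with $Z^{\rm conn}_t>0.00095$ and $\abs{a_t-\lambda_i}>1.388\times10^{-4}$ for $t\le15$. The finite composition $(\lambda,p_0)\mapsto F^{t}_\lambda(D_\lambda(p_0))$ for $t\le15$ is continuous. So there are neighborhoods $L_4\subset L'$ and $P_4\subset\intt\Delta_3$ on which these strict inequalities persist, with the margin kept above $10^{-5}$, and on which the $15$th iterate lies in the $\varepsilon$-ball about $z(\lambda)$. I also take $L_4$ inside the shifted window above and inside the ordered cone. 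From step $15$ onward, forward invariance keeps the orbit in $N$ forever, and each $a_k$ is read off a point of $N$, giving $\abs{a_k-\lambda_i}>0.07>10^{-5}$. Dividing by $a_k<8.01$ yields $\abs{a_k-\lambda_i}/a_k\ge10^{-5}/8.01>10^{-6}$ for all $k$.

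The main obstacle is the uniformity step: the attraction and the separation must hold with constants independent of $\lambda\in L_4$, not just at $\lambda^*$. Two points need care. First, $P$ comes from the unperturbed $A_7$, so the fixed $P$-norm has to be used for all $\lambda$, which is why I argue via joint continuity of $D_zF^7_\lambda$. Second, the seven phases of the tube have to be tracked by continuity on compact sets so that $Z$ stays bounded away from zero throughout.
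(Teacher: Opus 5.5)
Your proof is correct and follows the paper's argument essentially step for step. The steps are: implicit-function continuation of the seven-cycle using invertibility of $A_7-I$; a uniform trapping ball in the fixed $P$-norm together with its seven-phase tube; finite-time continuity of the fifteen-step connection; the separation-based lower factor $10^{-5}/8.01$; and the convex-combination upper factor $3.02/4.99$. The only difference is cosmetic: you get the uniform contraction from joint continuity of $D_zF^7_\lambda$ on a fixed ball about $z_*^+$, whereas the paper perturbs the Lyapunov inequality at $z(\lambda)$ and repeats the compact-sphere argument; the two are interchangeable.
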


\begin{proof}
We divide the argument into continuation, uniform attraction, finite-time
entry, and the numerical factor bounds.

\smallskip
\noindent\emph{Step 1: continuation of the seven-cycle.}
Define $\Psi(\lambda,z):=F_\lambda^{7}(z)-z$.  All seven $Z$ denominators are
positive at the cycle by \eqref{eq:Z-bounds} and, by continuity, positive
nearby, so $\Psi$ is $C^1$ near $(\lambda^*,z_*^+)$.  Identity
\eqref{eq:shifted-cycle-connection} gives $\Psi(\lambda^*,z_*^+)=0$, and
\[
 D_z\Psi(\lambda^*,z_*^+)=A_7-I
\]
is invertible because $1$ is not an eigenvalue of $A_7$
(Lemma~\ref{lem:A-schur}).  The implicit-function theorem gives an open
neighborhood $U_\lambda$ of $\lambda^*$ and a $C^1$ periodic-point function
$z(\lambda)$ satisfying
\begin{equation}\label{eq:continued-cycle}
 F_\lambda^{7}(z(\lambda))=z(\lambda),
 \qquad z(\lambda^*)=z_*^+.
\end{equation}

\smallskip
\noindent\emph{Step 2: uniform local attraction.}
At the central point, $P-A_7^TPA_7\succ0$.  Positive definiteness is open in the entries: if $Q_7\succ0$, its minimum on the Euclidean unit sphere is some $m>0$, and every symmetric perturbation $E$ with $\norm{E}_2<m/2$ satisfies $x^T(Q_7+E)x\geq m/2$ for every Euclidean unit $x$. The matrix $D_zF_\lambda^{7}(z(\lambda))$ depends continuously on $\lambda$, so after shrinking $U_\lambda$, the strict inequality
\[
 P-D_zF_\lambda^{7}(z(\lambda))^T
 P D_zF_\lambda^{7}(z(\lambda))\succ0
\]
holds for every $\lambda\in U_\lambda$.  Repeating the compact-unit-sphere
argument from Lemma~\ref{lem:nonlinear-attraction}, now over the closure of a
sufficiently small parameter ball, gives a common contraction constant
$q<1$ and a common radius $\varepsilon>0$ such that the $P$-ball
\begin{equation}\label{eq:parameter-trap}
 B_\lambda:=\{z:\norm{z-z(\lambda)}_P<\varepsilon\}
\end{equation}
is mapped into itself by $F_\lambda^7$ for every parameter in that smaller
ball.  Shrinking $\varepsilon$ if necessary, all seven intermediate images
$F_\lambda^j(B_\lambda)$, $0\leq j<7$, stay in neighborhoods where $Z>0$ and
where every cycle-phase separation remains strict.

\smallskip
\noindent\emph{Step 3: admissible initial conditions enter the trapping tube.}
At the central parameter, \eqref{eq:shifted-cycle-connection} gives exact
entry after fifteen steps.  The map
\begin{equation}\label{eq:finite-entry-map}
 (\lambda,r)\longmapsto F_\lambda^{15}(D_\lambda(r))
\end{equation}
is continuous near $(\lambda^*,r_*)$: it is a finite composition of rational
maps, and every denominator on the central connection is positive by
\eqref{eq:Z-bounds}.  Since $B_{\lambda^*}$ is an open neighborhood of
$z_*^+$, continuity gives a joint open neighborhood $W$ of
$(\lambda^*,r_*)$ whose image under \eqref{eq:finite-entry-map} lies in the
corresponding parameter-dependent trapping ball; $W$ contains a product
$L_4\times P_4$ of open neighborhoods, and \eqref{eq:p4-box} and
\eqref{eq:r4-box} allow $P_4$ to be chosen inside $\intt\Delta_3$.

For any $(\lambda,p_0)$ in this product, the first fifteen iterates are
defined by finite-time continuity, the fifteenth state lies in $B_\lambda$,
and thereafter the seven-step iterates remain in $B_\lambda$ with the
intermediate states in its seven-phase trapping tube. Hence the orbit is
defined for all time.

\smallskip
\noindent\emph{Step 4: lower factor bound.}
At the central parameter and initial weight, the finite connection has
absolute separation greater than $1.3886903675\times10^{-4}$ by
\eqref{eq:transient-separation}; the cycle has the much larger separation
\eqref{eq:cycle-separation}.  All relevant functions are continuous, so we
may shrink the trapping tube and $L_4\times P_4$ so that
\begin{equation}\label{eq:gap-1e-5}
 \abs{a_k-\lambda_i}>10^{-5}
 \quad\text{for all }i\text{ and all }k\geq0.
\end{equation}
This is a finite continuity assertion for $0\leq k\leq15$ and a uniform
seven-phase assertion in the forward-invariant trapping tube for $k\geq15$.

Shrink $L_4$ further so that $\lambda_4<8.01$.  Every scalar $a_k$ along an
admissible orbit is a convex combination of the eigenvalues, by
$a_0=\mu_\lambda(p_0)$ and $a_k=\mu_\lambda(p_{k-1})$ for $k\geq1$; thus
$a_k\leq\lambda_4<8.01$, and with \eqref{eq:gap-1e-5},
\[
 \frac{\abs{a_k-\lambda_i}}{a_k}
 >\frac{10^{-5}}{8.01}>10^{-6}.
\]

\smallskip
\noindent\emph{Step 5: upper factor bound.}
Shrink $L_4$ once more so that
\begin{equation}\label{eq:spectrum-neighborhood-bounds}
 \lambda_1>4.99,
 \qquad\lambda_4<8.01,
 \qquad\lambda_4-\lambda_1<3.02.
\end{equation}
For every convex combination $a_k\in[\lambda_1,\lambda_4]$ and every
$\lambda_i$ in the same interval,
\[
 \abs{a_k-\lambda_i}\leq\lambda_4-\lambda_1<3.02,
 \qquad a_k\geq\lambda_1>4.99,
\]
so
\[
 \frac{\abs{a_k-\lambda_i}}{a_k}
 <\frac{3.02}{4.99}<0.61.
\]
This completes all parts of \eqref{eq:factor-n4}.
\end{proof}

\section{Extension to every finite dimension}
\label{sec:higher-n}

Fix $n>4$.  We embed the four active eigenvalues at indices
$1,2,3,n$ and choose any distinct central extra eigenvalues
\begin{equation}\label{eq:extra-eigs}
 7.49<\nu_4<\nu_5<\cdots<\nu_{n-1}<7.51.
\end{equation}
Such a choice exists for every finite $n$: for example, equally spaced points in the interval $(7.49,7.51)$ suffice.  The resulting central ordered
spectrum is
\begin{equation}\label{eq:n-spectrum}
 \lambda^{*,n}=
 (5,\ 5.8786699041860466,\ 6.712781491065337228\ldots,
 \nu_4,\ldots,\nu_{n-1},\ 8).
\end{equation}

Give every extra coordinate zero projective weight.  The corresponding face
of $\Delta_{n-1}$ is invariant because each update has the form
\[
 p_i'=\frac{p_i(a-\lambda_i)^2}{Z(a,p)};
\]
if $p_i=0$, then $p_i'=0$.  Thus the four-dimensional seven-cycle persists as
a boundary cycle of the $n$-dimensional projective map.

Use independent local coordinates
\[
 (a,p_1,p_2,p_3,q_4,\ldots,q_{n-1}),
\]
where $q_\ell$ is the weight at $\nu_\ell$, and eliminate the active weight at
the eigenvalue $8$:
\[
 p_n=1-p_1-p_2-p_3-\sum_{\ell=4}^{n-1}q_\ell.
\]
At a boundary-cycle phase, $q=0$ and
\[
 Z_t=\sum_{j\in\{1,2,3,n\}}p_{t,j}(a_t-\lambda_j)^2>0.
\]
For an extra weight,
\begin{equation}\label{eq:q-update}
 q_\ell'=\frac{q_\ell(a-\nu_\ell)^2}{Z(a,p,q)}.
\end{equation}
Differentiate \eqref{eq:q-update} at $q=0$ by the quotient rule gives
\begin{align*}
 \frac{\partial q_\ell'}{\partial q_\ell}\bigg|_{q=0}
 &=\frac{(a-\nu_\ell)^2Z
 -q_\ell(a-\nu_\ell)^2(\partial Z/\partial q_\ell)}{Z^2}
 \bigg|_{q=0}
 =\frac{(a-\nu_\ell)^2}{Z},\\
 \frac{\partial q_\ell'}{\partial q_m}\bigg|_{q=0}
 &=-\frac{q_\ell(a-\nu_\ell)^2(\partial Z/\partial q_m)}{Z^2}
 \bigg|_{q=0}=0\qquad(m\neq\ell).
\end{align*}
Every derivative of $q_\ell'$ with respect to a base variable $a,p_1,p_2,p_3$
also contains the prefactor $q_\ell$ and is zero at $q=0$.  Thus the one-step
derivative at phase $t$ is block upper triangular:
\begin{equation}\label{eq:one-step-block}
 \begin{pmatrix}
  B_t&C_t\\
  0&\diag(\gamma_t(\nu_4),\ldots,\gamma_t(\nu_{n-1}))
 \end{pmatrix},
 \qquad
 \gamma_t(\nu)=\frac{(a_t-\nu)^2}{Z_t}.
\end{equation}
Products of block upper-triangular matrices are block upper triangular, and
their diagonal blocks are the products of the diagonal blocks. Therefore the full seven-step derivative $M_n$ has the form
\begin{equation}\label{eq:tau}
 M_n=
 \begin{pmatrix}
   A_7&*\\
   0&\diag(\tau(\nu_4),\ldots,\tau(\nu_{n-1}))
 \end{pmatrix},
 \qquad
 \tau(\nu):=\prod_{t=0}^{6}\frac{(a_t-\nu)^2}{Z_t}.
\end{equation}
The certificate evaluates the unshifted interval $\bar\nu\in[3.49,3.51]$.  Translation by four leaves every difference and $Z_t$ unchanged, so \eqref{eq:tau-cert} holds for every $\nu\in[7.49,7.51]$, including every extra eigenvalue in \eqref{eq:extra-eigs}.

The characteristic polynomial of a block upper-triangular matrix is the
product of the characteristic polynomials of its diagonal blocks. Thus, the
eigenvalues of $M_n$ are those of $A_7$ together with the scalars
$\tau(\nu_\ell)$.  Lemma~\ref{lem:A-schur} and \eqref{eq:tau-cert} imply
\begin{equation}\label{eq:Mn-schur}
 \rhoSp(M_n)<1.
\end{equation}

Since $\rhoSp(M_n)<1$, we have $\norm{M_n^k}_2\leq C\gamma^k$ for any
$\gamma\in(\rhoSp(M_n),1)$ and some finite $C$, so the series
\begin{equation}\label{eq:Pn-series}
 \widetilde P_n:=\sum_{k=0}^{\infty}(M_n^T)^kM_n^k
\end{equation}
converges, satisfies $\widetilde P_n\succeq I\succ0$, and obeys the exact
identity $\widetilde P_n-M_n^T\widetilde P_nM_n=I$.  It therefore supplies an adapted Lyapunov norm in the full state space, even though the four-dimensional matrix $P$ does not include the new coordinates, and thus the nonlinear attraction proof of Lemma~\ref{lem:nonlinear-attraction} applies verbatim in the full state space.

The boundary cycle lies on a simplex face, but the rational map is smooth on an ordinary ambient Euclidean neighborhood because every cycle denominator is positive, so its attracting neighborhood intersects the simplex in a relative neighborhood containing interior points. To see this explicitly, start from the boundary initialization $r_*$, assign sufficiently small positive values to all extra weights, and subtract their sum from the active weight $r_{*,4}$ at eigenvalue $8$, which is possible because $r_{*,4}>0.0108$ by \eqref{eq:r4-box}. The resulting weight lies in $\intt\Delta_{n-1}$ and can be arbitrarily close to the boundary weight.  The boundary admissible state lands on the cycle after fifteen steps; by continuity of the finite iterate, every sufficiently close interior weight lands in the attracting neighborhood. Choose one such interior weight and call it $r_*^{(n)}$.

We must also continue the cycle under perturbations of all $n$ eigenvalues. The map $\Psi_n(\lambda,z)=F_\lambda^7(z)-z$ is $C^1$ near the central boundary cycle because the denominators are positive, and its state derivative $M_n-I$ is invertible by \eqref{eq:Mn-schur}.  The implicit-function theorem therefore continues the periodic point for every sufficiently small perturbation of all $n$ eigenvalues, and the strict Lyapunov inequality, the finite-time entry property, and all separation inequalities persist after the parameter and initial neighborhoods are shrunk.

The certificate's separation test includes every $\bar\nu\in[3.49,3.51]$, hence after translation every $\nu\in[7.49,7.51]$. The factor-bound calculation in Steps 4--5 of Proposition~\ref{prop:n4-open} therefore applies to active and extra coordinates alike. We have proved the following.

\begin{proposition}[Uniform factor bounds for every finite $n$]
\label{prop:n-open}
For every finite $n\geq4$, there are nonempty open sets
\begin{equation}\label{eq:LnPn}
 L_n\subset\{\lambda\in\R^n:0<\lambda_1<\cdots<\lambda_n\},
 \qquad P_n\subset\intt\Delta_{n-1},
\end{equation}
such that every admissible projective orbit initialized by
$(\lambda,p_0)\in L_n\times P_n$ is defined for all time and satisfies
\begin{equation}\label{eq:factor-all-n}
 10^{-6}\leq\frac{\abs{a_k-\lambda_i}}{a_k}\leq0.61
 \qquad(1\leq i\leq n,\ k\geq0).
\end{equation}
\end{proposition}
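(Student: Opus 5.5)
For $n=4$ the statement is exactly Proposition~\ref{prop:n4-open}, so the plan is to fix $n>4$ and assemble the pieces developed above in the same order as the four steps of that proof. The data are the central spectrum $\lambda^{*,n}$ from \eqref{eq:n-spectrum}, the boundary seven-cycle $z_*^{+,n}$ with extra weights $q=0$, and the full seven-step derivative $M_n$. By \eqref{eq:Mn-schur}, $\rhoSp(M_n)<1$.

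\emph{Continuation and attraction.} First apply the implicit-function theorem to $\Psi_n(\lambda,z)=F^7_\lambda(z)-z$. The derivative $M_n-I$ is invertible, and all seven $Z_t$ stay positive near the cycle by \eqref{eq:Z-bounds}. This gives a $C^1$ periodic point $z(\lambda)$ on a neighborhood $U$ of $\lambda^{*,n}$ in $\R^n$. For $\lambda\neq\lambda^{*,n}$ the cycle may leave the face $q=0$. That is harmless: we work with the rational map on an ambient Euclidean neighborhood and only intersect with $\Delta_{n-1}$ at the end. Next, use $\widetilde P_n$ from \eqref{eq:Pn-series} as the Lyapunov matrix. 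The strict inequality $\widetilde P_n-D_zF^7_\lambda(z(\lambda))^T\widetilde P_nD_zF^7_\lambda(z(\lambda))\succ0$ is open in $\lambda$. Repeating the compact-sphere argument of Lemma~\ref{lem:nonlinear-attraction} uniformly over a closed parameter ball then yields a common $q<1$ and radius $\varepsilon$. The $\widetilde P_n$-balls $B_\lambda$ are forward invariant under $F^7_\lambda$, and $\varepsilon$ is taken small enough that the seven intermediate images stay where $Z>0$.

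\emph{Entry from interior admissible weights.} I use the interior weight $r_*^{(n)}$ constructed above. At the central parameter, $F^{15}_{\lambda^{*,n}}(D_{\lambda^{*,n}}(r_*^{(n)}))$ lies in the open ball $B_{\lambda^{*,n}}$. The map $(\lambda,r)\mapsto F^{15}_\lambda(D_\lambda(r))$ is continuous near this point, since its denominators are close to those of the boundary connection and hence positive by \eqref{eq:Z-bounds}. So a product neighborhood $L_n\times P_n$ lands in the parameter-dependent trap. I intersect it with the open ordered cone and with $\intt\Delta_{n-1}$; both are open, so this preserves openness. Every such orbit is then defined for all time.

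\emph{Factor bounds.} For the lower bound, I need $\abs{a_k-\lambda_i}>10^{-5}$ for all $i$ and $k$. At the central boundary orbit this holds by \eqref{eq:transient-separation} and \eqref{eq:cycle-separation}, including the extra eigenvalues via the $\bar\nu\in[3.49,3.51]$ clause after translation by four. I expect this step to be the main obstacle, for a subtle reason: the certified separations concern the boundary orbit issued from $r_*$, not from $r_*^{(n)}$. It is handled by continuity. Choose the tilt defining $r_*^{(n)}$ small enough that its first $15$ iterates keep separation above $10^{-5}$, since finitely many strict inequalities persist. For $k\geq15$, shrink $\varepsilon$ so that every point of the seven-phase tube keeps its scalar more than $10^{-5}$ from every eigenvalue of every $\lambda\in L_n$. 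This is possible because the cycle separation $7.5\times10^{-2}$ is strict and the functions involved are continuous on a compact closure. Since $a_k$ is a convex combination of the $\lambda_j$, we get $a_k\le\lambda_n<8.01$, and so the ratio exceeds $10^{-5}/8.01>10^{-6}$. For the upper bound, shrink $L_n$ so that $\lambda_1>4.99$, $\lambda_n<8.01$ and $\lambda_n-\lambda_1<3.02$, which is possible because the central extras lie in $(7.49,7.51)$. Step~5 of Proposition~\ref{prop:n4-open} then gives $\abs{a_k-\lambda_i}/a_k<3.02/4.99<0.61$ for every $i$, which completes \eqref{eq:factor-all-n}.
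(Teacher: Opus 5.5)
Your proposal is correct and follows essentially the same route as the paper. The shared steps are: implicit-function continuation of the boundary seven-cycle using invertibility of $M_n-I$; the series Lyapunov matrix $\widetilde P_n$ to get a uniform trapping tube; entry from the tilted interior weight $r_*^{(n)}$ by finite-time continuity; and Steps~4--5 of Proposition~\ref{prop:n4-open}, with the extra eigenvalues covered by the translated $\bar\nu\in[3.49,3.51]$ clause.

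Two remarks, neither of which affects your argument. First, the continued cycle in fact stays on the face $q=0$: that face is invariant for every $\lambda$, so the four-dimensional continuation padded with zeros is the unique implicit-function solution. Second, fix $\varepsilon$ before choosing the tilt. This is always possible, because the untilted orbit lands exactly at the cycle point.
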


There is no hidden uniformity claim as $n\to\infty$: for each fixed finite
$n$, finitely many distinct extra eigenvalues are selected and then a
possibly $n$-dependent open neighborhood is taken.  The rate constants
$10^{-6}$ and $0.61$, however, are the same for every finite $n$.

\section{Lifting the projective construction to BB problems and measure}
\label{sec:lift}

We first obtain componentwise gradient bounds for a diagonal quadratic.  Fix
$\lambda\in L_n$ and choose any $p_0\in P_n$, any radius $r>0$, and any sign
vector $\sigma\in\{\pm1\}^n$.  Set
\begin{equation}\label{eq:d-from-p}
 d_0^i:=\sigma_i r\sqrt{p_{0,i}}.
\end{equation}
Then $\sum_i(d_0^i)^2=r^2$ and
$(d_0^i)^2/\sum_j(d_0^j)^2=p_{0,i}$.  Conversely, on any fixed open orthant,
the map
\begin{equation}\label{eq:theta}
 \Theta_\sigma:(0,\infty)\times\intt\Delta_{n-1}\to
 \{d\in\R^n:\operatorname{sign}(d_i)=\sigma_i\},
 \qquad
 \Theta_\sigma(r,p)_i=\sigma_ir\sqrt{p_i},
\end{equation}
has the smooth inverse $r=\norm{d}_2$, $p_i=d_i^2/\norm{d}_2^2$, and is therefore a diffeomorphism. In particular, the image of $(1,2)\times P_n$ is a nonempty open set of initial spectral gradients.

Combining Proposition~\ref{prop:n-open} with
Lemma~\ref{lem:projective} gives, coordinate by coordinate,
\[
 \abs{d_0^i}\,10^{-6k}
 \leq
 \abs{d_k^i}
 =\abs{d_0^i}\prod_{t=0}^{k-1}
 \frac{\abs{a_t-\lambda_i}}{a_t}
 \leq
 \abs{d_0^i}\,0.61^k.
\]
This is the spectral version of \eqref{eq:component-main}.
Because $P_n\subset\intt\Delta_{n-1}$, every $d_0^i$ is nonzero.  The lower
bound then makes every $d_k^i$, and hence every $g_k$, nonzero.  Lemma
\ref{lem:bb-step} shows inductively that every BB denominator
$s_{k-1}^Ty_{k-1}=s_{k-1}^TAs_{k-1}$ is strictly positive.  Thus the BB1
iteration is well-defined for all $k$, rather than merely being a formal
solution of the scalar recurrence.  The upper bound tends to zero, so
$g_k\to0$ and, because $A$ is invertible, $x_k-x_*=A^{-1}g_k\to0$.

We next allow arbitrary eigenvectors and prove openness in the full matrix
space.  Define
\begin{equation}\label{eq:An}
 \mathcal A_n:=\{A\in\Spp^n:
  \lambda_1(A)<\cdots<\lambda_n(A),\ 
  (\lambda_1(A),\ldots,\lambda_n(A))\in L_n\}.
\end{equation}
This set is nonempty: it contains the diagonal matrix with any spectrum in
$L_n$.  It is open because the ordered eigenvalues of a real symmetric matrix
are continuous functions of the matrix entries (Weyl's inequality
gives $\abs{\lambda_i(A)-\lambda_i(B)}\leq\norm{A-B}_2$) and $L_n$ is open.

For $A\in\mathcal A_n$ and $g\neq0$, define the orientation-free spectral
weights
\begin{equation}\label{eq:spectral-weights-general}
 p_i(A,g):=\frac{g^T\Pi_i(A)g}{g^Tg}
 =\frac{\norm{\Pi_i(A)g}^2}{\norm{g}^2}.
\end{equation}
They are nonnegative and sum to one because the spectral projectors are
orthogonal and sum to $I$.  

The map $(A,g)\mapsto p(A,g)$ is continuous on simple-spectrum matrices with $g\neq0$: on the simple-spectrum set,
\begin{equation}\label{eq:projector-polynomial}
 \Pi_i(A)=\prod_{j\neq i}
 \frac{A-\lambda_j(A)I}{\lambda_i(A)-\lambda_j(A)}.
\end{equation}
To verify \eqref{eq:projector-polynomial}, apply the polynomial on its right
to an eigenvector of $A$: it equals one on the $i$th eigenspace and zero on
every other eigenspace.  Every denominator is nonzero because the spectrum is simple. Therefore, continuity of the eigenvalues makes the right side locally continuous in $A$, and \eqref{eq:spectral-weights-general} is a quotient of continuous functions with positive denominator $g^Tg$, proving the claim.

Define the open set of matrix--gradient pairs
\begin{equation}\label{eq:Gamma}
 \Gamma_n:=\{(A,g):A\in\mathcal A_n,\ 1<\norm{g}<2,\ p(A,g)\in P_n\}.
\end{equation}
It is open by the continuity just established.  It is nonempty: take a diagonal $A$ with spectrum in $L_n$, any $p\in P_n$, a sign vector $\sigma$, and $g_i=(3/2)\sigma_i\sqrt{p_i}$; then $\norm{g}=3/2$ and $p(A,g)=p$.

For the fixed vector $b$, the map
\begin{equation}\label{eq:joint-diffeo}
 \mathcal J_b:\Spp^n\times\R^n\to\Spp^n\times\R^n,
 \qquad \mathcal J_b(A,x)=(A,Ax-b),
\end{equation}
is a smooth bijection with smooth inverse
$\mathcal J_b^{-1}(A,g)=(A,A^{-1}(g+b))$, matrix inversion being smooth on
$\Spp^n$; it is therefore a diffeomorphism.  Set
\begin{equation}\label{eq:Omega-final}
 \Omega_n:=\mathcal J_b^{-1}(\Gamma_n),
 \qquad
 \mathcal X_n(A):=\{x:(A,x)\in\Omega_n\}.
\end{equation}
The set $\Omega_n$ is nonempty and open, and every fiber
$\mathcal X_n(A)$ is nonempty and open: for a fixed $A\in\mathcal A_n$, one
can choose a spectral-coordinate gradient from \eqref{eq:d-from-p} and then
put $x=A^{-1}(g+b)$.  Every nonempty open subset of a finite-dimensional
Euclidean space has positive Lebesgue measure; applied in dimensions
$n(n+1)/2+n$ and $n$, this proves the positive-measure assertions of
Theorem~\ref{thm:main}.

It remains to prove the norm, objective, and non-superlinear conclusions.
Because $Q$ is orthogonal, \eqref{eq:component-main} gives
\[
 10^{-12k}\norm{g_0}^2
 \leq
 \norm{g_k}^2=\sum_i\abs{d_k^i}^2
 \leq
 0.61^{2k}\norm{g_0}^2,
\]
and taking nonnegative square roots proves \eqref{eq:gradient-main}.
Since $g_k=Ae_k$, one has, in the eigenbasis,
\begin{equation}\label{eq:energy-spectral}
 \norm{e_k}_A^2=g_k^TA^{-1}g_k
 =\sum_{i=1}^n\frac{\abs{d_k^i}^2}{\lambda_i},
\end{equation}
and applying the component bounds term by term proves
\eqref{eq:energy-main}.  Completing the square in \eqref{eq:quadratic} gives
$f(x)-f(x_*)=\tfrac12\norm{x-x_*}_A^2$, so squaring \eqref{eq:energy-main}
and multiplying by $1/2$ proves \eqref{eq:objective-main}.  The lower bounds
in \eqref{eq:root-lower} follow by taking $k$th roots, for example
$\norm{g_k}^{1/k}\geq10^{-6}\norm{g_0}^{1/k}\to10^{-6}$.  Finally,
$Q$-superlinear convergence of a positive null sequence
($u_{k+1}/u_k\to0$) implies $u_k^{1/k}\to0$, so \eqref{eq:root-lower} rules
out both root-superlinear and $Q$-superlinear convergence. All assertions of Theorem~\ref{thm:main} are proved.

\section{Scope and limitations}

The theorem establishes a robust lower-geometric rate on a constructed open
family.  Four qualifications are important.

\begin{enumerate}
\item It is a result for BB1, not BB2.  The initial step is the explicit inverse gradient Rayleigh quotient \eqref{eq:alpha0}.  An arbitrary first step has a related projective description but is not the admissible-initialization map certified here.

\item The conclusion holds on a nonempty open, positive-measure family of spectra and initial points. It is not asserted for every positive spectrum or for almost every spectrum.

\item The computer-assisted step proves existence of one exact orbit inside a tiny rational box. The displayed decimal for $\lambda_3^*$ is a locator for that exact root, not a claim that the eigenvalue is itself a terminating decimal or has a closed form.

\item The constants $10^{-6}$ and $0.61$ are safety margins. The proof uses a minimum transient separation of approximately $1.39\times10^{-4}$ and an upper multiplier bound close to $0.6$ at the central translated spectrum. No optimality is claimed.
\end{enumerate}

\bibliographystyle{plainnat}
\bibliography{bb_references}

\newpage

\appendix

\section{Computer-Assisted Certificate}
\label{app:interval-certificate}

The computer-assisted component of the proof is implemented by the
following Python script. It uses only Python's standard
\texttt{decimal} module.

\begin{Verbatim}
from decimal import Decimal, getcontext, localcontext, ROUND_FLOOR, ROUND_CEILING

getcontext().prec = 120
D = Decimal

def opdown(fn):
    with localcontext() as c:
        c.prec=120; c.rounding=ROUND_FLOOR
        return +fn()
def opup(fn):
    with localcontext() as c:
        c.prec=120; c.rounding=ROUND_CEILING
        return +fn()

class IV:
    __slots__=('lo','hi')
    def __init__(self,lo,hi=None):
        self.lo=D(str(lo)) if not isinstance(lo,D) else lo
        self.hi=self.lo if hi is None else (D(str(hi)) if not isinstance(hi,D) else hi)
        assert self.lo <= self.hi
    def __add__(a,b):
        b=asiv(b); return IV(opdown(lambda:a.lo+b.lo),opup(lambda:a.hi+b.hi))
    __radd__=__add__
    def __neg__(a): return IV(-a.hi,-a.lo)
    def __sub__(a,b): return a+(-asiv(b))
    def __rsub__(a,b): return asiv(b)+(-a)
    def __mul__(a,b):
        b=asiv(b)
        los=[opdown(lambda x=x,y=y:x*y) for x in (a.lo,a.hi) for y in (b.lo,b.hi)]
        his=[opup(lambda x=x,y=y:x*y) for x in (a.lo,a.hi) for y in (b.lo,b.hi)]
        return IV(min(los),max(his))
    __rmul__=__mul__
    def recip(a):
        assert not (a.lo <= 0 <= a.hi)
        valslo=[opdown(lambda x=x:D(1)/x) for x in (a.lo,a.hi)]
        valshi=[opup(lambda x=x:D(1)/x) for x in (a.lo,a.hi)]
        return IV(min(valslo),max(valshi))
    def __truediv__(a,b): return a*asiv(b).recip()
    def __rtruediv__(a,b): return asiv(b)*a.recip()
    def __repr__(a): return f'[{a.lo},{a.hi}]'

def asiv(x): return x if isinstance(x,IV) else IV(x)
def sumup(vals):
    s=D(0)
    for v in vals: s=opup(lambda s=s,v=v:s+v)
    return s

class AD:
    __slots__=('v','d')
    def __init__(self,v,d=None):
        self.v=asiv(v); self.d=[IV(0) for _ in range(8)] if d is None else d
    def __add__(a,b):
        b=asad(b); return AD(a.v+b.v,[x+y for x,y in zip(a.d,b.d)])
    __radd__=__add__
    def __neg__(a): return AD(-a.v,[-x for x in a.d])
    def __sub__(a,b): return a+(-asad(b))
    def __rsub__(a,b): return asad(b)+(-a)
    def __mul__(a,b):
        b=asad(b); return AD(a.v*b.v,[x*b.v+a.v*y for x,y in zip(a.d,b.d)])
    __rmul__=__mul__
    def recip(a): return AD(a.v.recip(),[-x/(a.v*a.v) for x in a.d])
    def __truediv__(a,b): return a*asad(b).recip()
    def __rtruediv__(a,b): return asad(b)*a.recip()

def asad(x): return x if isinstance(x,AD) else AD(x)
def var(box,j):
    der=[IV(0) for _ in range(8)];der[j]=IV(1);return AD(box,der)

L2=AD('1.8786699041860466')
ONE=AD(1); FOUR=AD(4)

def fmap_with_den(z,la):
    m=z[0]; p=[z[1],z[2],z[3],ONE-z[1]-z[2]-z[3]]
    terms=[p[i]*(m-la[i])*(m-la[i]) for i in range(4)]
    den=sum(terms,AD(0))
    mean=sum((la[i]*p[i] for i in range(4)),AD(0))
    return [mean]+[terms[i]/den for i in range(3)],den

def fmap(z,la):
    out,_=fmap_with_den(z,la)
    return out

def fpow(z,la,n):
    for _ in range(n): z=fmap(z,la)
    return z

def f_and_j(boxes):
    x=[var(boxes[i],i) for i in range(8)]
    la=[ONE,L2,x[0],FOUR]
    z=x[1:5]
    a=fpow(z,la,7)
    p=x[5:8]; p4=ONE-sum(p,AD(0))
    m0=sum((la[i]*([p[0],p[1],p[2],p4][i]) for i in range(4)),AD(0))
    b=fpow([m0]+p,la,15)
    out=[a[i]-z[i] for i in range(4)]+[b[i]-z[i] for i in range(4)]
    return [o.v for o in out],[o.d for o in out]

C=[D(s) for s in [
'2.71278149106533722827430219171662596122131200973311943581550524557629575811',
'1.33344967252388129918980034968384933800281196626470897628663867165111253656',
'0.828108598362557830284706279933642675447754907755381063536594022726890227454',
'0.167043051370226631389809462759502319781038286313814177038878187525592790929',
'0.004717957350367412952812533939501994168806798103523267952088050658941184644',
'0.0000134113750442893933116720965327494222816004773939806475307653666721805974245',
'0.989171705769078149047269711515135535847128223427916068474163743526926233731',
'0.00000349260028439145589133467903469364875523517774851271154130219629904643379924']]

RSTR='''
-0.810526348848592888901084652460863262098889862944445915997922,307.242673381334385271539228546921025279578294613703728839309,307.741025629901050999029082768910651032865241495000968766894,312.782742571536891626096323271366901386778871589158283516632,-0.0137368376192333684238175487126343869576306541180311710900804,-421.401532972233614339176178157325006069741606160158648367929,-420.720628332827512110952534958393390951741568565011272813875,-417.510650350147764867647073770427680507322497423363849322798
-2.53381240743594641550513665431248219325091959956406240279772,219.146844709590120107670663557533796892315501194180693740921,221.314903815662600782780829104204280970210726156970789478092,238.433061138882096606510907554404630970281465079411306474734,-0.0066156137536975764499876413028288031850150714975420122664002,-202.94552899548185110702795449214798069667985467445722312406,-202.617607662910774712346795218836979798249918528977551841755,-201.071693306206914696638563015692889394595107198327734687951
1.21923182241183846078543106866408237572050290347089038684799,-283.490269055458601705948958623004565413293682800367378712469,-284.215637538925680074135617356259178167657374408793138161663,-294.814697516715266368336657507713365927445251871977984552338,0.00510430037289669853951692869951609336796693289217752688876014,156.583346292001534198852820229253143092303058113580534397193,156.330337418986871290621900890772993639761529519065013423077,155.137581686689234202670907269428105368197596348629742204233
-1.26108044272173568710731141145266481650790341533600008992776,313.942924362985217723054543998906451668776939851163166266309,314.691544869619096433878614662418685401023724831284472124436,326.12764070745284415061672026001723134524840696137617430052,-0.00557213932631546351238737770698205496104361101774514176877326,-170.935124890502247958801827793221206576836705555576496966117,-170.658926275954081329267222464116649321236505862472492490618,-169.356847511562687722423100405437534964897323010662551564416
0.043078195604279001914097963327737890401646235452894255110503,-31.2979228333921281928882113925871982705271157312435670381131,-31.3219714710519066420310359824923010174212504037635542365452,-32.1662172009335501562092089499137302982952674704895954239567,0.000478099922934842210827835744935335196661930745438524440498953,14.6665517947567543301073202404399084503649597758063756119065,14.642853439673177102278537394045764059087335950989354927027,14.531132658756835739832753728374049724120361151869572985717
0.000226520043874099667313273699369756542509219592106852411297113,-0.0225135215590217304102381601094316680754744631503520593497794,-0.0226850497610954124630365608985981365370252310505830687908151,-0.025225307271391448839622398185040987284482193153534946168519,-0.0000646416290386389765723445987398631521148396996372621461687481,-0.0407002504446717796356496112619123261021708978107613923529244,-0.0408037515085730062610317248805338145956192819108221297272651,-0.0395655532419233021327540805034448469654922061236616605213023
-0.0591009778675161000065292747250136710540662107925102493119043,4.52542863747015400468119868457513686182540510968085333767212,4.5708376271796282978337702766393546845959286011088156970192,5.25277980583025269632142081451745114823486084850385324393674,0.0182267787018476751330387775829333443438138046050174382552172,13.7294182492651964487088572696615033597888320575657007499638,13.7547692717949756360701594372390047794266623618339207810693,13.39537325223256904440264858605473726414587831319750699597
0.0000756712444366593089630182277257919839520913409911366332434105,-0.00819897600424230753264026491865837876618103810542438832290775,-0.00825475135727810105199639024201679021048816336552888167047913,-0.00915735808625744896227380169097910909397341703431881848992986,-0.0000197290639090762745552792690106575683366414360898273244501419,-0.0159791300873645197383620839265853550901431071050090962014869,-0.0160185329798007737064568268873709203041685752673075101630795,-0.0155338627526389510516621091918715132796619616194482426763043'''
R=[[IV(s) for s in line.split(',')] for line in RSTR.strip().splitlines()]

PSTR='''
977656.6634223271235507758960608141,227877753.5872023336353569232362594,226076347.85889685492460652333685333,213421596.35978306381023106533985262
227877753.5872023336353569232362594,53180721626.999672123361693796538579,52760940596.820424133577893141389299,49812202173.83420304678138035243017
226076347.85889685492460652333685333,52760940596.820424133577893141389299,52344478955.779192312153776789975303,49419059318.10867586670666295706405
213421596.35978306381023106533985262,49812202173.83420304678138035243017,49419059318.10867586670666295706405,46657455589.35772965711287496916893'''
P=[[IV(s) for s in line.split(',')] for line in PSTR.strip().splitlines()]

def mm(A,B):
    return [[sum((A[i][k]*B[k][j] for k in range(len(B))),IV(0)) for j in range(len(B[0]))] for i in range(len(A))]
def mv(A,v): return [sum((A[i][j]*v[j] for j in range(len(v))),IV(0)) for i in range(len(A))]
def tr(A): return [list(x) for x in zip(*A)]

def ldl_pivots(A):
    n=len(A); L=[[IV(0) for _ in range(n)] for _ in range(n)]; piv=[]
    for i in range(n):
        L[i][i]=IV(1)
        di=A[i][i]-sum((L[i][k]*L[i][k]*piv[k] for k in range(i)),IV(0))
        piv.append(di)
        for j in range(i+1,n):
            num=A[j][i]-sum((L[j][k]*L[i][k]*piv[k] for k in range(i)),IV(0))
            L[j][i]=num/di
    return piv

if __name__=='__main__':
    rad=D('1e-55')
    boxes=[IV(c-rad,c+rad) for c in C]
    _,J=f_and_j(boxes)
    fx,_=f_and_j([IV(c) for c in C])
    rf=mv(R,fx)
    RJ=mm(R,J)
    B=[[IV(1 if i==j else 0)-RJ[i][j] for j in range(8)] for i in range(8)]
    dx=[IV(-rad,rad) for _ in range(8)]
    bd=mv(B,dx)
    kc=[IV(C[i])-rf[i]+bd[i] for i in range(8)]
    print('Krawczyk ratios |K-c|/radius:')
    for i in range(8):
        assert C[i]-rad < kc[i].lo and kc[i].hi < C[i]+rad
        ratio=max(abs(kc[i].lo-C[i]),abs(kc[i].hi-C[i]))/rad
        print(i,ratio)
    bnorm=max(sumup(max(abs(v.lo),abs(v.hi)) for v in row) for row in B)
    assert bnorm < 1
    print('contraction infinity row-sum bound:',bnorm)
    # Period derivative A = derivative of first four outputs wrt z plus I.
    A=[[J[i][1+j]+(1 if i==j else 0) for j in range(4)] for i in range(4)]
    print('A intervals widths / centers:')
    for row in A: print(' '.join(str((v.lo+v.hi)/2) for v in row))
    print('P LDL lower pivots:')
    ppiv=ldl_pivots(P); assert all(x.lo>0 for x in ppiv)
    print(*(x.lo for x in ppiv))
    Q=[[P[i][j]-mm(mm(tr(A),P),A)[i][j] for j in range(4)] for i in range(4)]
    print('Q diagonal-dominance lower margins:')
    for i in range(4):
        off=sumup(max(abs(Q[i][j].lo),abs(Q[i][j].hi)) for j in range(4) if j!=i)
        margin=opdown(lambda:Q[i][i].lo-off); assert margin>0
        print(margin)
    # Uniform transverse Floquet multiplier for any inserted eigenvalue nu in [3.49,3.51].
    xv=[var(boxes[i],i) for i in range(8)]
    la=[ONE,L2,xv[0],FOUR]; zz=xv[1:5]; nu=AD(IV('3.49','3.51')); tau=AD(1)

    cycle_state=xv[1:5]; cycle_z_min=D('Infinity')
    for _ in range(7):
        cycle_state,den=fmap_with_den(cycle_state,la)
        cycle_z_min=min(cycle_z_min,den.v.lo)
    p=xv[5:8]; p4=ONE-sum(p,AD(0))
    conn_state=[sum((la[i]*[p[0],p[1],p[2],p4][i] for i in range(4)),AD(0))]+p
    conn_z_min=D('Infinity')
    for _ in range(15):
        conn_state,den=fmap_with_den(conn_state,la)
        conn_z_min=min(conn_z_min,den.v.lo)
    print('minimum cycle normalization denominator:',cycle_z_min)
    print('minimum connection normalization denominator:',conn_z_min)
    assert cycle_z_min>D('0.1516357118221540')
    assert conn_z_min>D('0.0009510722601452239')

    cycle_next,_=fmap_with_den(xv[1:5],la)
    phase_gap=(xv[1]-cycle_next[0]).v
    print('first cycle phase difference a0-a1:',phase_gap)
    assert phase_gap.lo>D('0.1782019618055700634')

    for _ in range(7):
        m=zz[0]; pp=[zz[1],zz[2],zz[3],ONE-zz[1]-zz[2]-zz[3]]
        den=sum((pp[i]*(m-la[i])*(m-la[i]) for i in range(4)),AD(0))
        tau=tau*(m-nu)*(m-nu)/den
        zz=fmap(zz,la)
    print('transverse tau [3.49,3.51]:',tau.v)
    assert D(0)<tau.v.lo and tau.v.hi<1
    # No multiplier vanishes during the exceptional initial segment or on the cycle.
    def sep(a,b):
        a=a.v if isinstance(a,AD) else asiv(a); b=b.v if isinstance(b,AD) else asiv(b)
        if a.hi < b.lo: return opdown(lambda:b.lo-a.hi)
        if b.hi < a.lo: return opdown(lambda:a.lo-b.hi)
        return D(0)
    xx=[sum((la[i]*[p[0],p[1],p[2],p4][i] for i in range(4)),AD(0))]+p
    transient_sep=D('Infinity')
    for _ in range(16):
        transient_sep=min(transient_sep,*(sep(xx[0],ell) for ell in la),sep(xx[0],nu))
        xx=fmap(xx,la)
    cycle_sep=D('Infinity'); xx=xv[1:5]
    for _ in range(7):
        cycle_sep=min(cycle_sep,*(sep(xx[0],ell) for ell in la),sep(xx[0],nu))
        xx=fmap(xx,la)
    print('minimum initial-segment m-lambda separation:',transient_sep)
    print('minimum cycle separation (including nu box):',cycle_sep)
    assert transient_sep>0 and cycle_sep>0
\end{Verbatim}

\section{Prompt Used for Proof Verification}
\label{app:audit-prompt}

The following prompt was submitted to GPT 5.6 Sol.

\begin{Verbatim}
Current task statement

For $n\geq4$, given $n$ positive real numbers $\lambda_1\leq ... \leq \lambda_n$. Consider the sequence $\{d^1_k, ..., d^n_k\} (k\geq 0)$ where $d^i_k$ are real numbers for $1\leq i\leq n$, satisfying

$$\begin{equation}
d^i_{k+1}=d^i_k\cdot\left(
\frac{
\sum_{j=1}^n(\lambda_j-\lambda_i)(d^j_{k-1})^2
} {
\sum_{j=1}^n\lambda_j(d^j_{k-1})^2
}
\right), ~~i=1, \cdots, n.
\end{equation}$$
Specifically, in the first iteration, the relation becomes 
$$\begin{equation}
d^i_1=d^i_0\cdot\left(\frac{\sum_{j=1}^n(\lambda_j-\lambda_i)(d^j_0)^2} {\sum_{j=1}^n\lambda_j(d^j_0)^2}\right), ~~i=1, \cdots, n.
\end{equation}$$

Now, given $2n$ real numbers $\lambda_1, ... \lambda_n, d_0^1, ..., d_0^n$, characterize the decreasing rate of the sequence $|d_k^i|$. In particular, prove or disprove the following:

For a positive measure of $(\lambda_1, ..., \lambda_n)\in\mathbb{R}^n$ and a positive measure of $(d_0^1, ..., d_0^n)\in\mathbb{R}^n$, $|d_k^i|$ converges at a linear rate, i.e., there exists positive constants $C_i$ and $0<\rho_i<1$ only related to the $2n$ initial real numbers, such that $|d_k^i|\geq C_i\rho_i^k$.

Assume for purposes of this task that a complete affirmative proof or counterexample exists. Partial progress does not count unless it implies exactly the resolution above. In particular, proofs for zero measure of $(\lambda_1, ..., \lambda_n)\in\mathbb{R}^n$ or zero measure of $(d_0^1, ..., d_0^n)\in\mathbb{R}^n$, $|d_k^i|$, constructions of counter-examples with zero measure of $(\lambda_1, ..., \lambda_n)\in\mathbb{R}^n$ or zero measure of $(d_0^1, ..., d_0^n)\in\mathbb{R}^n$, $|d_k^i|$, reductions to another
unproved conjecture, computational verification through any fixed $n$, and candidate
counterexamples without a complete certificate are insufficient.

Use multiagent v2 aggressively and dynamically. You have up to 64 concurrent agents available. Do not use a fixed assignment such as "N agents for strategy X." Instead, manage the search using the following heuristics:

- Begin with a genuinely diverse portfolio of approaches. Agents should explore substantially different formulations, invariants, reductions, algebraic viewpoints, structural inductions, decompositions, flow formulations, transition systems, embeddings, extremal arguments, and computational sanity checks.

- Do not tell most agents the currently favored approach. Preserve independence during early rounds so that agents do not all converge to the same attractive but incomplete reduction.

- Maintain an explicit registry of approach families. Group agents by the mathematical idea they are using, not by superficial wording. If many agents converge to one family, redirect some of them toward underexplored formulations.

- Do not allow one approach to dominate merely because it gives elegant reductions. A route that ends at a lemma equivalent in strength to the original conjecture is not close to completion unless it supplies a genuinely new proof of that lemma.

- When an approach stalls at a theorem-strength missing lemma, mark that route as blocked. Only continue assigning agents to it if someone proposes a materially new mechanism, invariant, or construction.

- Keep several incompatible proof routes alive through multiple rounds. Cross-pollinate ideas only after independent agents have developed them far enough to expose their real strengths and gaps.

- Require agents to return concrete lemmas, constructions, equations, or counterexamples to proposed sublemmas. Reject status reports, vague optimism, and claims that an unproved global compatibility statement is "routine."

- The root agent should repeatedly synthesize, challenge, redirect, and launch new rounds. Do not stop after the first wave fails. Produce a complete proof if one survives audit; otherwise report only the strongest rigorously proved derivation and its exact remaining gap.

Do not return merely because current approaches fail or agents report theorem-strength gaps.

Continue launching new rounds, reopening blocked approaches only when there is a genuinely new mechanism, and searching for fresh formulations.

Return only when a complete affirmative proof has been found and survives adversarial audit. Do not return a reduction, partial result, isolated missing lemma, "best effort" summary, or
explanation of why the problem is difficult.

Spend at least 8 hours on this before even thinking of returning or giving up.

Public search may be used only for ordinary mathematical background or standard named
theorems, not to search for a solution to this exact conjecture or benchmark. Do not search
the public web merely to determine whether this problem is open, and do not answer that it is open.
\end{Verbatim}
\end{document}